\theoremstyle{plain}
\newtheorem{theorem}{Theorem}
\newtheorem{corollary}[theorem]{Corollary}
\newtheorem{lemma}[theorem]{Lemma}
\newtheorem{proposition}[theorem]{Proposition}
\theoremstyle{remark}
\newtheorem{remark}[theorem]{Remark}
\newtheorem{example}[theorem]{Example}
\theoremstyle{definition}
\newtheorem{definition}[theorem]{Definition}
\newtheorem*{notation}{Notation}
\newtheorem*{question}{Open problem}
\newtheorem*{ackn}{Acknowledgements}
\DeclarePairedDelimiter{\abs}{\lvert}{\rvert}
\DeclarePairedDelimiter{\scal}{\langle}{\rangle}
\DeclarePairedDelimiter{\norm}{\lVert}{\rVert}
\DeclarePairedDelimiterX{\dset}[2]{\lbrace}{\rbrace}{#1\;\delimsize|\;#2}
\DeclareMathOperator{\conv}{conv}
\DeclareMathOperator{\var}{var}
\newcommand{\ball}{{\overline{B}}}
\newcommand{\R}{\mathbb R}
\newcommand{\N}{\mathbb N}
\newcommand{\f}{\varphi}
\DeclareMathOperator{\card}{card}
\DeclareMathOperator{\dist}{dist}
\title[Compactness in normed spaces: a unified approach through semi-norms]{Compactness in normed spaces:\\ a unified approach through semi-norms}
\author{Jacek Gulgowski}
\address[J.~Gulgowski]{Institute of Mathematics\\
Faculty of Mathematics, Physics and Informatics\\
University of Gda\'nsk\\
80-308 Gda\'nsk\\
Poland}
\email[J.~Gulgowski]{dzak@mat.ug.edu.pl}
\author{Piotr Kasprzak}
\address[P. Kasprzak]{Department of Nonlinear Analysis and Applied Topology\\
Faculty of Mathematics and Computer Science\\
  Adam Mickiewicz University in Pozna\'n\\
  ul.\ Uniwersytetu Pozna\'nskiego 4\\
  61-614 Pozna\'n\\
  Poland}
\email[P.~Kasprzak]{kasp@amu.edu.pl}
\author{Piotr Ma\'ckowiak}
\address[P. Ma\'ckowiak]{Department of Nonlinear Analysis and Applied Topology\\
Faculty of Mathematics and Computer Science\\
  Adam Mickiewicz University in Pozna\'n\\
  ul.\ Uniwersytetu Pozna\'nskiego 4\\
  61-614 Pozna\'n\\
  Poland}
\email[P.~Ma\'ckowiak]{piotr.mackowiak@amu.edu.pl}
\keywords{compactness criterion, equinormed set, functions of bounded Schramm variation, precompact set, relatively compact set, semi-norm}
\subjclass[2010]{46B50, 26A45}
\date{\today}
\begin{document}
\begin{abstract}
In this paper we prove two new abstract compactness criteria in normed spaces. To this end we first introduce the notion of an equinormed set using a suitable family of semi-norms on the given normed space satisfying some natural conditions. Those conditions, roughly speaking, state that the norm can be approximated (on the equinormed sets even uniformly) by the elements of this family. As we are given some freedom of choice of the underlying semi-normed structure that is used to define equinormed sets, our approach opens a new perspective for building compactness criteria in specific normed spaces.  As an example we show that natural selections of families of semi-norms in spaces $C(X,\R)$ and $l^p$ for $p\in[1,+\infty)$ lead to the well-known compactness criteria (including the Arzel\`a-Ascoli theorem). In the second part of the paper, applying the abstract theorems, we construct a simple compactness criterion in the space of functions of  bounded  Schramm variation.
\end{abstract}

\maketitle

\section{Introduction}
Compactness is one of the  most fundamental mathematical notions. Because of that, after more than a century from its formal introduction, it still attracts great interest of researchers. Compactness is so widespread that it seems nigh to impossible to even briefly mention all the theories where it plays a crucial role. Therefore, let us only draw the readers' attention to a few examples, which may be particularly interesting from an analyst point of view.

It is well-known that in infinite dimensional Banach spaces the Peano theorem on the existence of solutions of ordinary differential equations does not hold. It is even not so difficult to provide examples of differential equations with continuous right-hand sides which do not admit a local solution of the Cauchy problem. However, the situation changes completely (and local solutions to infinite-dimensional ODEs do exist), if besides the continuity assumption some `compactness conditions' on the right-hand side of the equation are added. Usually those conditions are expressed in terms of Kuratowski or Hausdorff measures of non-compactness and allow to prove the compactness of the set of approximate solutions to the given problem. (For more information on this topic see~\cite{BBK}*{Section~1.2}.) The same (general) phenomenon can be seen when we look at the Brouwer and Schauder (or some of its generalizations like Darbo or Sadovskii) fixed point theorems, where the latter result simply cannot hold without any compactness assumptions. (More details on compactness and fixed point theorems can be found in~\cite{BBK}*{Section~2.2}.) Of course one should not forget about the role of compactness in its various forms and shapes in functional analysis. Compact linear operators and their spectral properties, Montel spaces, that is, topological vector spaces with the Heine--Borel property (among which the spaces of holomorphic functions or smooth functions on open subsets are especially interesting), or compact embeddings of Sobolev-type spaces and Poincar\'e inequalities are just a few topics worth mentioning here.

Because of the importance of compactness in mathematics and the fact that very often it is not so easy or straightforward to prove that certain sets (or operators) are (or are not) compact, there is a great need for finding easy-to-use compactness criteria. Among the most famous are the Ascoli--Arz\`ela and Fr\'echet--Kolmogorov theorems in the spaces of continuous and $p$-Lebesgue integrable functions, respectively (see~\cite{precup}*{Section~1.2 and~1.3}). There are also less known relatives of those results for sequence spaces (cf., for example,~\cite{AKPRS}*{Section~1.1.9} or~\cite{BBK}*{Example~1.1.10}). Although all those criteria are formulated for different Banach spaces, they share some similarities. Namely, certain characteristics corresponding to the very nature of the considered space (connected with, for example, continuity for the space $C(X,\mathbb R)$, integral for $L^p(X,\mathbb R)$ or remainder of a series for $l^p$) must be small for all the elements of a given set so that this set and its description can be `reduced' to a family of finitely many points. This is of course understandable, as we work with compactness, and especially evident in the proofs of those results. 

Therefore, a natural question arises whether it is possible to give a unified approach to various known compactness criteria which in its formulation would use only general and basic functional analytic and no space-specific notions, but at the same time 
it would be quite general (in the sense that it would work in any normed/Banach space) and easy-to-use in applications. In the literature there are already quite general compactness criteria like, for example, the Hausdroff criterion (see e.g.~\cite{BBK}*{Theorem~1.1}) or a criterion for a separable Banach space $E$ which states that if $(E_n)_{n \in \mathbb N}$ is an increasing sequence of finite-dimensional subspaces of $E$ whose union is dense in $E$, then any bounded subset $A$ is relatively compact in $E$ if and only if $\lim_{n \to \infty} \sup_{a \in A}\dist(a,E_n)=0$ (cf. e.g.~\cite{schmidt}*{Proposition~1}). However, both of them have some imperfections. The Hausdorff criterion seems a little bit artificial, and therefore not easily applicable, as there is almost no difference between compactness and total boundedness. On the other hand, the second criterion works only in separable Banach spaces. There are some of its generalizations to weakly compactly generated spaces (cf.~~\cite{schmidt}*{Theorem~2}), but they become more technical and still they do not cover all the Banach spaces.        

The aim of this paper is to provide a new general compactness criterion satisfying all the above requirements. To this end we first introduce the notion of an equinormed set using a suitable family of semi-norms on the given normed space satisfying some natural conditions (see~Definition~\ref{def:equinormed} below). Then, we proceed with studying basic set-theoretic and topological properties of such sets. And finally, equipped with all the necessary facts, we prove our two main results providing the link between equinormed and relatively (precompact) sets (see Theorems~\ref{thm:compactness_ver1} and~\ref{thm:compactness_ver2}). It is worth mentioning here that our approach is motivated by the ideas and papers of Bugajewski and Gulgowski (see, e.g.~\cites{BG20, G21}), who proved new compactness criteria in various spaces of functions of bounded variation.   

As is shown by examples in Sections~4 and~5, from our abstract compactness criteria one can quite easily `recover' the compactness criteria in $C(X,\mathbb R)$ and $l^p$. However, our new results have (at least) one advantage over the classical results. Namely, they are much simpler from a methodological point of view. This is especially evident when we look at the compactness criterion in separable (or weakly compactly generated) Banach spaces mentioned above. Instead of calculating a limit (which involves an infinite sequence $(E_n)_{n \in \mathbb N}$ of finite-dimensional subspaces of $E$) or even a limit superior involving elements of the dual space $E^\ast$ in the more general version of that result, in the case of our results the verification whether the given set is precompact/relatively compact can be `reduced' to proving certain inequality. Moreover, we are also given some freedom of choice of the underlying semi-normed structure that is used to define equinormed sets.

In the last sixth section we show how to apply the abstract results to give a completely new compactness criterion in the space of functions of bounded Schramm variation. It is worth mentioning here that the Schramm variation generalizes the classical variations (Jordan, Wiener, Young) as well as the Waterman (also know as the $\Lambda$) variation. Because of that the results of the first part of Section~6 complete the studies conducted by Bugajewski and Gulgowski (see, e.g.~\cites{BG20, G21}) concerning compactness criteria in various spaces of functions of bounded variation. As a by-product of our research, some ambiguities concerning the definition of the Schramm variation are clarified and some errors appearing in the original paper of Schramm (see~\cite{schramm}) are fixed. (For a detailed discussion of problems and their solutions see Section~6.)


\section{Preliminaries}

The aim of this section is to introduce the notation and conventions used in this paper and to recall some basic definitions.

Throughout the paper we will be dealing with a normed space $(E, \norm{\cdot}_E)$ equipped not only with the default norm $\norm{\cdot}_E$ but also with various (semi-)norms $\norm{\cdot}_i$. Therefore, when talking for example about balls, we will always add some subscripts to indicate with respect to which of the (semi-)norms the ball in question is taken. Thus, the closed balls with center $x$ and radius $r>0$ in $(E, \norm{\cdot}_E)$ and $(E, \norm{\cdot}_i)$ will be denoted by $\ball_E(x,r)$  and $\ball_i(x,r)$, respectively. Similar convention will also apply to open balls $B_E(x,r)$ and $B_i(x,r)$. If $A$ is a subset of $E$, then by $\overline{A}$ and $\overline{A}^i$ we will denote the closure of $A$ with respect to the (semi-)norms $\norm{\cdot}_E$ and $\norm{\cdot}_i$, respectively. (Although $\norm{\cdot}_i$ in general will be a semi-norm, the closure of a set with respect to $\norm{\cdot}_i$ should be understood in the classical way, that is, as the set of all points $x \in E$ such that $\norm{x-x_n}_i \to 0$ for some sequence $(x_n)_{n \in \mathbb N}$ of elements of $A$.) 

Whenever we will say \emph{interval}, we will mean a non-empty and convex subset of $\mathbb R$. In other words, unless stated otherwise we also admit \emph{degenerate} closed intervals consisting of a single element. Having two intervals $I$ and $J$, we will say that they are \emph{non-overlapping} if their intersection $I \cap J$ consists of at most one point. For a function $x$ and an interval $I:=[a,b]$, to simplify the notation, we will write $x(I)$ for $x(b)-x(a)$. If $A\subseteq \mathbb R$, then by $\chi_A$ we will denote the characteristic function of the set $A$.

We will also need the notion of a Young function. Let us recall that a function $\varphi\colon [0,+\infty)\to[0,+\infty)$ is said to be a \emph{Young function} (or \emph{$\varphi$-function}) if it is convex and such that $\varphi(t)=0$ if and only if $t=0$. It is easy to check that every Young function is continuous and strictly increasing so that $\f(t)\to+\infty$ as $t\to+\infty$.

We will use default symbols to denote the classical spaces. And so, by $l^p$ we will denote the Banach space of all real $p$-summable sequences endowed with the norm $\norm{x}_{l^p}:=\bigl(\sum_{n=1}^\infty \abs{\xi_n}^p\bigr)^{1/p}$, where $p \in [1,+\infty)$ and $x=(\xi_n)_{n \in \mathbb N}$. 
Similarly, the Banach spaces of all real convergent and null sequences with the supremum norm $\norm{x}_{\infty}:=\sup_{n \in \mathbb N}\abs{\xi_n}$ will be denoted by $c$ and $c_0$, respectively; here as before, $x=(\xi_n)_{n \in \mathbb N}$. If $X$ is a compact metric space, then by $C(X,\mathbb R)$ we will denote the Banach space of all continuous real-valued functions defined on $X$ with the norm $\norm{x}_{\infty}:=\sup_{t \in X}\abs{x(t)}$. We use the same symbol to denote the norms in $c$, $c_0$ and $C(X,\mathbb R)$, but this should not lead to any confusion. We are also aware of the inconsistency in the way of denoting the norms in the spaces $l^p$ and $c_0$, $C(X,\mathbb R)$. However, when dealing with the sequence spaces, we will often encounter semi-norms indexed by natural numbers. Thus, using the symbol $\norm{\cdot}_p$ for the norm in $l^p$ could definitely lead to some mistakes. On the other hand, there is no point in not using the well-established symbol for the supremum norm.

Although the definitions of a relatively compact and precompact set are well-known, sometimes they are mistakenly used interchangeably. So, to avoid any ambiguity, we have decided to provide them. A metric space $X$ is called \emph{precompact} if its completion is compact. Equivalently, $X$ is precompact if and only if every sequence in $X$ contains a Cauchy subsequence, or if and only if $X$ is \emph{totally bounded}, that is, for each $\varepsilon>0$ there is a finite collection of points $a_1,\ldots,a_n \in X$ such that $X=\bigcup_{i=1}^n \ball_X(a_i,\varepsilon)$. Let us also recall that a subset of a metric space $X$ is precompact if it is precompact as a metric space itself (with the metric inherited from $X$). On the other hand, a subset $A$ of a metric space $X$ is called \emph{relatively compact} in $X$ if its closure in $X$ is compact, or, equivalently, if every sequence of elements in $A$ contains a subsequence convergent in $X$. In general, relative compactness is a stronger notion than precompactness, that is, if $A$ is a relatively compact subset of a metric space $X$, then it is also a precompact subset of $X$. However, in complete metric spaces those two notions coincide. (For more information on precompact and relatively compact sets see for example~\cite{MV}*{Chapter 4}.)

\section{Abstract compactness criterion}
\label{sec:abstract_compactness_criterion}

The main goal of this section is to prove a characterization of relatively compact and precompact subsets of a normed space whose norm can be ``approximated'' by a suitable family of semi-norms, that is, we will deal with a normed space $(E,\norm{\cdot}_E)$ and a family $\bigl\{\norm{\cdot}_i\bigr\}_{i \in I}$ of semi-norms on $E$ satisfying the following two conditions:
\begin{enumerate}[label=\textbf{\textup{(A\arabic*)}}]
 \item\label{i} $\norm{x}_E=\sup_{i \in I}\norm{x}_i$ for $x \in E$,
 \item\label{ii} for every $i,j \in I$ there is an index $k \in I$ such that $\norm{x}_i \leq \norm{x}_k$ and $\norm{x}_j \leq \norm{x}_k$ for $x \in E$; in other words, the family $\bigl\{\norm{\cdot}_i\bigr\}_{i \in I}$ forms a directed set. 
\end{enumerate}

\begin{remark}
Note that in any normed space the collection of the families of semi-norms satisfying conditions~\ref{i} and~\ref{ii} is non-empty. We can always take the family consisting of $\norm{\cdot}_E$ or its multiples, e.g. $(1-\frac{1}{n})\norm{\cdot}_E$ with $n\in \mathbb N$. Observe also that in a normed space $E$ the norm of an element $x$ can be expressed as $\norm{x}_E=\sup_{x^\ast \in \ball_{E^\ast}(0,1)}\abs{x^\ast(x)}$, where $E^\ast$ denotes the dual space of $E$, and therefore the family $\bigl\{\norm{\cdot}_A\bigr\}_{A \in I}$ of all the semi-norms of the form $\norm{\cdot}_A = \sup_{x^\ast \in A}\abs{x^\ast(\cdot)}$, where $A$ is a finite subset of $\ball_{E^\ast}(0,1)$, satisfies conditions~\ref{i} and~\ref{ii}.
\end{remark}

As it is often done in the context of locally convex spaces, we will now ``translate'' conditions~\ref{i} and~\ref{ii} expressed in terms of (semi-)norms into the language of geometry. We have decided to provide the proofs of the following two simple lemmas for the convenience of the reader.

\begin{lemma}\label{lem:geom_i}
Let $(E,\norm{\cdot}_E)$ be a normed space and let $\bigl\{\norm{\cdot}_i\bigr\}_{i \in I}$ be a family of semi-norms on $E$. The family $\bigl\{\norm{\cdot}_i\bigr\}_{i \in I}$ satisfies condition~\ref{i} if and only if\/ $\ball_E(0,1)=\bigcap_{i \in I}\ball_i(0,1)$.   
\end{lemma}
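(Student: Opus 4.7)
The plan is to exploit the homogeneity of (semi-)norms and reduce both implications to elementary rescaling arguments, using the standard correspondence between (semi-)norms and their unit balls.

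First, I would unfold the definitions: $\ball_E(0,1)=\{x\in E:\norm{x}_E\le 1\}$ and $\bigcap_{i\in I}\ball_i(0,1)=\{x\in E:\norm{x}_i\le 1\text{ for every }i\in I\}$. The forward direction is then immediate, since under~\ref{i} the condition $\norm{x}_E\le 1$ is just the condition $\sup_{i\in I}\norm{x}_i\le 1$, which in turn is equivalent to $\norm{x}_i\le 1$ for every $i\in I$.

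For the converse, assume $\ball_E(0,1)=\bigcap_{i\in I}\ball_i(0,1)$ and fix $x\in E$. The case $x=0$ is trivial, so assume $x\neq 0$. I would first establish the upper bound $\sup_{i\in I}\norm{x}_i\le\norm{x}_E$ by considering the rescaled vector $y:=x/\norm{x}_E$, which lies in $\ball_E(0,1)$, hence in every $\ball_i(0,1)$, giving $\norm{x}_i\le\norm{x}_E$ for each $i\in I$. For the opposite inequality, I would argue by contradiction: if $\sup_{i\in I}\norm{x}_i<\norm{x}_E$, pick any $\lambda$ strictly between them; then $z:=x/\lambda$ satisfies $\norm{z}_i<1$ for every $i\in I$, hence $z\in\bigcap_{i\in I}\ball_i(0,1)=\ball_E(0,1)$, forcing $\norm{x}_E\le\lambda<\norm{x}_E$, a contradiction.

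I do not expect a genuine obstacle here; the only subtlety is being careful that, a priori, the supremum on the right could be $+\infty$, but the upper-bound step rules that out. The argument is essentially the Minkowski-functional correspondence between absolutely convex absorbing sets and semi-norms, specialised to this very simple situation.
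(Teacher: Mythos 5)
Your proof is correct and follows essentially the same route as the paper's: the forward direction is the same unfolding of definitions, and the converse is the same homogeneity-plus-rescaling contradiction. If anything, your choice to rescale by an intermediate $\lambda$ strictly between $\sup_{i\in I}\norm{x}_i$ and $\norm{x}_E$ is slightly more robust than the paper's division by $r:=\sup_{i\in I}\norm{x}_i$, since it also covers the degenerate case $r=0$ without comment.
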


\begin{proof}
First, let us assume that the family $\bigl\{\norm{\cdot}_i\bigr\}_{i \in I}$ satisfies condition~\ref{i} and let $x\in \ball_E(0,1)$. Then, for any $i \in I$ we have $\norm{x}_i\leq \norm{x}_E \leq1$, which clearly implies that $x \in \bigcap_{i \in I} \ball_i(0,1)$. On the other hand, $\norm{x}_i \leq 1$ for $i \in I$ if $x\in\bigcap_{i \in I} \ball_i(0,1)$. Hence, $\norm{x}_E = \sup_{i \in I} \norm{x}_i \leq 1$.

Let us now assume that $\ball_E(0,1) =\bigcap_{i \in I} \ball_i(0,1)$. We are going to show that the family $\bigl\{\norm{\cdot}_i\bigr\}_{i \in I}$ satisfies~\ref{i}. Take any $x \in E\setminus\{0\}$. Because of the homogeneity, we may assume that $\norm{x}_E = 1$. Then, by our assumption, we have $\norm{x}_i \leq 1$  for every $i \in I$. If $r:=\sup_{i \in I} \norm{x}_i< 1$, putting $y = \frac{1}{r}x$, we would have $\norm{y}_E=\frac{1}{r}>1$ and $\norm{y}_i = \frac{1}{r}\norm{x}_i \leq 1$ for $i \in I$. This in turn would imply that $\ball_E(0,1) \neq \bigcap_{i \in I} \ball_i(0,1)$, which is impossible.
\end{proof}

\begin{remark}\label{rem:geom_i}
Note that the equivalence described in Lemma~\ref{lem:geom_i} is no longer true, if we replace closed balls with open ones; it is the ``right'' implication that fails. Indeed, consider the space $l^1$ with its standard norm $\norm{\cdot}_{l^1}$ and the family of semi-norms $\norm{x}_i:=\sum_{n=1}^i \abs{\xi_n}$, where $i \in \mathbb N$ and $x=(\xi_n)_{n \in \mathbb N} \in l^1$. Then, condition~\ref{i} is obviously satisfied and $y:=(2^{-n})_{n \in \mathbb N}$ belongs to $\bigcap_{i \in \mathbb N} B_i(0,1)$, but $y \notin B_{l^1}(0,1)$. 
\end{remark}

\begin{lemma}\label{lem:geom_ii}
Let $(E,\norm{\cdot}_E)$ be a normed space and let $\bigl\{\norm{\cdot}_i\bigr\}_{i \in I}$ be a family of semi-norms on $E$. The family $\bigl\{\norm{\cdot}_i\bigr\}_{i \in I}$ satisfies condition~\ref{ii} if and only if for every $i,j \in I$ there exists an index $k \in I$ such that $\ball_k(0,1) \subseteq \ball_i(0,1) \cap \ball_j(0,1)$.   
\end{lemma}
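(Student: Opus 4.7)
My plan is to mirror the structure of the proof of Lemma~\ref{lem:geom_i}, handling each implication separately. Both directions follow from unwinding the definition of a closed semi-norm ball, $\ball_i(0,1)=\{x\in E:\norm{x}_i\le 1\}$, and the only subtlety is the usual one when passing from a semi-norm inequality to a geometric inclusion: vectors of semi-norm zero.

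For the forward implication, I would fix $i,j\in I$, invoke~\ref{ii} to obtain $k\in I$ with $\norm{x}_i\le\norm{x}_k$ and $\norm{x}_j\le\norm{x}_k$ for all $x\in E$, and then read off immediately that any $x\in\ball_k(0,1)$ satisfies $\norm{x}_i\le 1$ and $\norm{x}_j\le 1$, hence $x\in\ball_i(0,1)\cap\ball_j(0,1)$.

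For the backward implication, I would fix $i,j\in I$ and pick $k\in I$ with $\ball_k(0,1)\subseteq \ball_i(0,1)\cap\ball_j(0,1)$. To show $\norm{x}_i\le\norm{x}_k$ (and symmetrically for $j$), I would split on whether $\norm{x}_k$ vanishes. If $\norm{x}_k>0$, the rescaled vector $y:=x/\norm{x}_k$ satisfies $\norm{y}_k=1$, hence lies in $\ball_k(0,1)\subseteq\ball_i(0,1)$, giving $\norm{y}_i\le 1$ and therefore $\norm{x}_i\le\norm{x}_k$ by homogeneity. If $\norm{x}_k=0$, then for every $\lambda>0$ the vector $\lambda x$ still has $k$-semi-norm zero and so belongs to $\ball_k(0,1)\subseteq\ball_i(0,1)$; this forces $\lambda\norm{x}_i\le 1$ for every $\lambda>0$, so $\norm{x}_i=0=\norm{x}_k$. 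The argument for $\norm{x}_j\le\norm{x}_k$ is identical.

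There is essentially no obstacle; the only thing one has to be slightly careful about is that the implication ``$y\in\ball_k(0,1)\Rightarrow y\in\ball_i(0,1)$'' only gives $\norm{x}_i\le\norm{x}_k$ once we have promoted $x$ to a unit-$\norm{\cdot}_k$ vector, which is why the separate treatment of the $\norm{x}_k=0$ case is necessary (in that case scaling is unavailable). In contrast to Remark~\ref{rem:geom_i}, the use of closed rather than open balls here is harmless, since $\norm{y}_k=1$ already places $y$ in $\ball_k(0,1)$.
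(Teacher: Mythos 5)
Your proposal is correct and follows essentially the same route as the paper: the forward direction is identical, and the backward direction is the same rescaling argument, except that the paper folds your two cases ($\norm{x}_k>0$ and $\norm{x}_k=0$) into a single proof by contradiction by choosing a strict intermediate scale $r$ with $\norm{y}_j>r>\norm{y}_k$, which works uniformly whether or not $\norm{y}_k$ vanishes. Your explicit treatment of the degenerate case is a harmless stylistic variant.
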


\begin{proof}
First, let us assume that the family $\bigl\{\norm{\cdot}_i\bigr\}_{i \in I}$ satisfies condition~\ref{ii}. Let $i,j \in I$ be arbitrary and choose $k$ according to~\ref{ii}. Then, $\norm{x}_i\leq 1$ and $\norm{x}_j\leq 1$ for any $x\in\ball_k(0,1)$. And consequently, $\ball_k(0,1) \subseteq \ball_i(0,1) \cap \ball_j(0,1)$.

Now, take arbitrary indices $i,j \in I$. By our assumption there is $k \in I$ with the property that $\ball_k(0,1) \subseteq \ball_i(0,1) \cap \ball_j(0,1)$. Suppose that there is a point $y \in E\setminus\{0\}$ such that $\norm{y}_j> \norm{y}_k$. Then, $\norm{y}_j > r>\norm{y}_k$ for some $r>0$. In such a case we have $\norm{\frac{1}{r}y}_k < 1$ and $\norm{\frac{1}{r}y}_j>1$, which means that $\frac{1}{r}y\in\ball_k(0,1)$ and $\frac{1}{r}y\not\in\ball_j(0,1)$, a contradiction. Thus, $\norm{x}_j \leq \norm{x}_k$ for all $x \in E$. Similarly, $\norm{x}_i \leq \norm{x}_k$ for all $x \in E$.
\end{proof}

\begin{remark}\label{rem:geom_ii}
It is worth adding that in contrast to Lemma~\ref{lem:geom_i}, we can replace closed balls with open ones in the statement of Lemma~\ref{lem:geom_ii} without any change to the assertion. 
\end{remark}

Now, let us give an abstract version of a definition which was first introduced in~\cite{BG20} in the case of the space $BV[0,1]$ of functions of bounded Jordan variation. As this new notion will play a key role in our further studies, we are going to spend some time investigating its various properties. 

\begin{definition}\label{def:equinormed}
Let $(E,\norm{\cdot}_E)$ be a normed space equipped with a family $\bigl\{\norm{\cdot}_i\bigr\}_{i \in I}$ of semi-norms satisfying conditions~\ref{i} and~\ref{ii}.
A non-empty subset $A$ of $E$ is called \emph{equinormed} (or more precisely, \emph{equinormed with respect to the family $\bigl\{\norm{\cdot}_i\bigr\}_{i \in I}$}) if for every $\varepsilon>0$ there exists an index $k \in I$ such that $\norm{x}_E\leq \varepsilon + \norm{x}_k$ for all $x \in A$.
\end{definition}

The following proposition gives a ``sequential'' version of Definition~\ref{def:equinormed}.

\begin{proposition}\label{prop:normseq}
Let $(E,\norm{\cdot}_E)$ be a normed space equipped with a family $\bigl\{\norm{\cdot}_i\bigr\}_{i \in I}$ of semi-norms satisfying conditions~\ref{i} and~\ref{ii}. A non-empty subset $A$ of $E$ is equinormed if and only if there exists an infinite sequence $(i_n)_{n\in \N}$ of elements of $I$ such that
\begin{enumerate}[label=\textup{(\roman*)}]
 \item\label{prop:normseq_i} $\norm{x}_E=\lim_{n\to \infty}\norm{x}_{i_n}$ uniformly on $A$,
 \item\label{prop:normseq_ii} $\norm{x}_{i_n}\leq \norm{x}_{i_{n+1}}$ for every $x\in E$ and $n\in \N$.
\end{enumerate}
\end{proposition}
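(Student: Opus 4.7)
The plan is to prove both implications directly, with the interesting direction being the construction of the monotone sequence.

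For the easy ``if'' direction, I assume such a sequence $(i_n)_{n \in \N}$ exists and derive the equinormed property. Given $\varepsilon>0$, uniform convergence gives an $N$ with $\norm{x}_E - \norm{x}_{i_N} \leq \varepsilon$ for every $x\in A$ (note that $\norm{x}_{i_N}\leq \norm{x}_E$ by~\ref{i}, so the difference is automatically nonnegative and equals its absolute value). Taking $k:=i_N$ yields the required inequality, so Definition~\ref{def:equinormed} is satisfied.

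For the ``only if'' direction, suppose $A$ is equinormed. Applying Definition~\ref{def:equinormed} with $\varepsilon=\frac{1}{n}$ for each $n\in\N$, I obtain indices $k_n\in I$ such that
\[
\norm{x}_E \leq \tfrac{1}{n} + \norm{x}_{k_n} \quad \text{for all } x \in A.
\]
To upgrade $(k_n)$ to a monotone sequence, I would construct $(i_n)$ inductively using~\ref{ii}: set $i_1:=k_1$, and having chosen $i_n$, apply~\ref{ii} to the pair $\{i_n, k_{n+1}\}$ to obtain $i_{n+1}\in I$ with $\norm{x}_{i_n}\leq \norm{x}_{i_{n+1}}$ and $\norm{x}_{k_{n+1}}\leq \norm{x}_{i_{n+1}}$ for every $x\in E$. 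This gives~\ref{prop:normseq_ii} by construction.

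Finally I would verify~\ref{prop:normseq_i}: for $x\in A$ the inequality $\norm{x}_E \leq \tfrac{1}{n+1}+\norm{x}_{k_{n+1}} \leq \tfrac{1}{n+1}+\norm{x}_{i_{n+1}}$ combined with $\norm{x}_{i_{n+1}}\leq \norm{x}_E$ (which follows from~\ref{i}) gives $0\leq \norm{x}_E - \norm{x}_{i_{n+1}}\leq \tfrac{1}{n+1}$, and this bound is uniform on $A$. I expect no serious obstacle here; the only subtlety is to notice that the ``shifted'' inequality $\norm{x}_E\leq \tfrac{1}{n}+\norm{x}_{k_n}$ is preserved when we replace $k_n$ by the possibly larger semi-norm $\norm{\cdot}_{i_n}$, which is exactly what the directedness condition~\ref{ii} guarantees.
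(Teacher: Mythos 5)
Your proof is correct and follows essentially the same route as the paper's: the ``only if'' direction obtains indices from Definition~\ref{def:equinormed} with $\varepsilon=\tfrac1n$ and upgrades them to a monotone sequence via the directedness condition~\ref{ii}, while the ``if'' direction is the immediate observation about uniform convergence. The only cosmetic difference is that you separate the indices $k_n$ from the definition and the dominating indices $i_n$ explicitly, whereas the paper folds both choices into a single inductive step.
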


\begin{proof}
Assume that the set $A$ is equinormed and fix $i_1\in I$ such that $\norm{x}_E\leq \norm{x}_{i_1}+1$ for every $x\in A$. Now, by \ref{ii}, we can choose an index $i_2 \in I$ such that $\norm{x}_{i_1}\leq \norm{x}_{i_2}$ for $x\in E$, and $\norm{x}_E\leq \norm{x}_{i_2}+\frac{1}{2}$ for $x\in A$. Repeating this process, we can choose a sequence $(i_n)_{n\in \N}$ of indices such that $\norm{x}_{i_n}\leq \norm{x}_{i_{n+1}}$ for $x\in E$, and $\norm{x}_E\leq \norm{x}_{i_n}+\frac{1}{n}$ for $x\in A$; here $n$ is an arbitrary positive integer. This, together with~\ref{i}, implies that $\norm{x}_{i_n}\leq \norm{x}_E\leq \norm{x}_{i_n}+\frac{1}{n}$ for every $n\in \N$ and $x\in A$. Hence, $\norm{x}_E=\lim_{n\to \infty}\norm{x}_{i_n}$ uniformly on $A$. 

The second implication follows from a straightforward observation that if $(i_n)_{n\in \N}$ is a sequence of elements of $I$ satisfying the above conditions~\ref{prop:normseq_i} and~\ref{prop:normseq_ii}, then for any $\varepsilon>0$ there exists $n\in \N$ such that $\sup_{x \in A}\abs[\big]{\norm{x}_E-\norm{x}_{i_n}}\leq \varepsilon$.  
\end{proof}

Now, let us take a look at several simple properties of equinormed sets.

\begin{proposition}\label{prop:properties_of_equinormed}
Let $(E,\norm{\cdot}_E)$ be a normed space equipped with a family $\bigl\{\norm{\cdot}_i\bigr\}_{i \in I}$ of semi-norms satisfying conditions~\ref{i} and~\ref{ii}. Then,
\begin{enumerate}[label=\textup{(\alph*)}]
  \item\label{prop:properties_of_equinormed_a} finite and non-empty subsets of $E$ are equinormed,
	\item\label{prop:properties_of_equinormed_b} a subset of an equinormed set is equinormed,
  \item\label{prop:properties_of_equinormed_c} finite unions of equinormed subsets of $E$ are equinormed,
	\item\label{prop:properties_of_equinormed_cc} arbitrary intersections of equinormed subsets of $E$ are equinormed, provided that they are non-empty, 
	\item\label{prop:properties_of_equinormed_d} a subset of $E$ is equinormed if and only if its closure \textup(in the norm $\norm{\cdot}_E$\textup) is,
	\item\label{prop:properties_of_equinormed_e} for every scalar $\lambda \neq 0$ the set $\lambda A$ is equinormed if and only if $A$ is, 
  \item\label{prop:properties_of_equinormed_f} for every $h>0$ the set $\bigcup_{\lambda \in [0,h]} \lambda A$ is equinormed if and only if $A$ is.
\end{enumerate}
\end{proposition}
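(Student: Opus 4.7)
The plan is to verify parts (a)--(g) in the order they are stated, since each relies on earlier parts or on the two structural axioms \ref{i} and \ref{ii}. The underlying pattern throughout is: start from the equinormedness inequality $\norm{x}_E \leq \norm{x}_k + \varepsilon$ for $x$ in the relevant set, and combine finitely many indices into a single one via the directedness axiom~\ref{ii}.

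For~\ref{prop:properties_of_equinormed_a}, given a finite set $\{x_1,\dots,x_n\}$ and $\varepsilon>0$, I would use~\ref{i} to pick for each $j$ an index $i_j \in I$ with $\norm{x_j}_E \leq \norm{x_j}_{i_j} + \varepsilon$, and then apply~\ref{ii} inductively (or rather Proposition~\ref{prop:normseq} style, by repeated pairing) to obtain a single $k \in I$ dominating every $i_j$; since $\norm{\cdot}_{i_j}\leq\norm{\cdot}_k$ the inequality transfers to $k$. Part~\ref{prop:properties_of_equinormed_b} is immediate from the definition. Part~\ref{prop:properties_of_equinormed_c} reduces to~\ref{prop:properties_of_equinormed_a} in spirit: for $A_1,\dots,A_n$ equinormed, pick $k_1,\dots,k_n$ working for $\varepsilon$ on each $A_j$, then use~\ref{ii} to find a common dominator $k$, which then works for the union. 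Part~\ref{prop:properties_of_equinormed_cc} is the easiest: a non-empty intersection is contained in any one of the sets, so~\ref{prop:properties_of_equinormed_b} applies.

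For~\ref{prop:properties_of_equinormed_d}, one direction is again~\ref{prop:properties_of_equinormed_b}. For the other, note that~\ref{i} in particular gives $\norm{\cdot}_i \leq \norm{\cdot}_E$ for every $i$, so every semi-norm $\norm{\cdot}_i$ is $1$-Lipschitz (in particular continuous) with respect to $\norm{\cdot}_E$. Thus if $\norm{x}_E \leq \norm{x}_k + \varepsilon$ holds for every $x \in A$, passing to a $\norm{\cdot}_E$-limit of points from $A$ preserves the inequality on $\overline{A}$.

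For the homogeneity statements~\ref{prop:properties_of_equinormed_e} and~\ref{prop:properties_of_equinormed_f}, I would exploit that every semi-norm is absolutely homogeneous. Given $\lambda \neq 0$, the bound $\norm{y}_E \leq \norm{y}_k + \varepsilon$ for $y \in \lambda A$ rewrites as $\norm{x}_E \leq \norm{x}_k + \varepsilon/\abs{\lambda}$ for $x \in A$, so the two conditions are tautologically equivalent after a rescaling of $\varepsilon$. For~\ref{prop:properties_of_equinormed_f}: if $A$ is equinormed, pick $k$ so that $\norm{x}_E \leq \norm{x}_k + \varepsilon/h$ for all $x \in A$; then for every $\lambda \in [0,h]$ and $x \in A$,
\begin{equation*}
  \norm{\lambda x}_E = \lambda \norm{x}_E \leq \lambda \norm{x}_k + \lambda\varepsilon/h = \norm{\lambda x}_k + \lambda\varepsilon/h \leq \norm{\lambda x}_k + \varepsilon,
\end{equation*}
(the case $\lambda=0$ being trivial since $\norm{0}_E=0$). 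Conversely, $hA$ is contained in the union, so~\ref{prop:properties_of_equinormed_b} makes $hA$ equinormed and~\ref{prop:properties_of_equinormed_e} then transfers this to $A$.

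There is no real obstacle here; the only care needed is bookkeeping in~\ref{prop:properties_of_equinormed_f} (uniformity of $\lambda$ over $[0,h]$, which is why the $\varepsilon/h$ rescaling works) and remembering in~\ref{prop:properties_of_equinormed_d} that~\ref{i} automatically upgrades each semi-norm to a $\norm{\cdot}_E$-continuous map.
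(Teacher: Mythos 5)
Your proposal is correct and follows essentially the same route as the paper: the paper dismisses (a)--(d) as trivial (your directedness argument for (a) and (c), the inclusion argument for (b) and the intersection statement, and the passage to the limit for the closure are exactly the intended details), proves (e) via the identity $A=\frac{1}{\lambda}(\lambda A)$, which is your rescaling of $\varepsilon$ in disguise, and proves (f) by the same $\varepsilon/h$ computation and the same reduction of the converse to the inclusion $hA\subseteq\bigcup_{\lambda\in[0,h]}\lambda A$. No gaps.
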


\begin{proof}
The proofs of properties~\ref{prop:properties_of_equinormed_a}--\ref{prop:properties_of_equinormed_cc} are trivial. 

To prove~\ref{prop:properties_of_equinormed_d} observe that one implication follows from~\ref{prop:properties_of_equinormed_b}, while the other is a consequence of the passage to the limit in the inequality $\norm{x_n}_E \leq \varepsilon + \norm{x_n}_k$, where $(x_n)_{n \in \mathbb N}$ is a sequence of elements of the set $A$ convergent to some $x \in \overline{A}$ and the index $k$ is as in Definition~\ref{def:equinormed}.   

Property~\ref{prop:properties_of_equinormed_e} stems from the fact that $A=\frac{1}{\lambda}(\lambda A)$. 

Finally, if $A$ is equinormed, for a given $\varepsilon>0$ there is an index $k \in I$ such that $\norm{x}_E \leq \varepsilon/h + \norm{x}_k$ for $x \in A$. Then, for any $y \in \bigcup_{\lambda \in [0,h]} \lambda A$ we have $y=\lambda x$ for some $\lambda \in [0,h]$ and $x \in A$, and so $\norm{y}_E = \lambda \norm{x}_E \leq \lambda/h \cdot \varepsilon + \lambda \norm{x}_k \leq \varepsilon + \norm{y}_k$. This proves that the set $\bigcup_{\lambda \in [0,h]} \lambda A$ is equinormed. The other implication follows from the properties~\ref{prop:properties_of_equinormed_b} and~\ref{prop:properties_of_equinormed_e} and the fact that $hA \subseteq \bigcup_{\lambda \in [0,h]} \lambda A$. 
\end{proof}

In general, we cannot replace finite unions with infinite ones in Proposition~\ref{prop:properties_of_equinormed}~\ref{prop:properties_of_equinormed_c} as evidenced by the following example.

\begin{example}\label{ex:property_c_revised}
Consider the space $c_0$ endowed with the norm $\norm{\cdot}_{\infty}$ and the family $\{\norm{\cdot}_i\}_{i \in \mathbb N}$ of semi-norms given by the formulae $\norm{x}_i:=\max_{1\leq n \leq i}\abs{\xi_n}$, where $i \in \mathbb N$ and $x=(\xi_n)_{n \in \mathbb N} \in c_0$. (It is trivial to see that such a family of semi-norms satisfies conditions~\ref{i} and~\ref{ii}.) Clearly, each set $A_k:=\{e_k\}$, where $e_k$ denotes the $k$-th unit vector of $c_0$, as a singleton is equinormed in $c_0$. However, their union $A:=\bigcup_{k=1}^\infty A_k$ is not equinormed, because for each $i \in \mathbb N$ we have $\norm{e_{i+1}}_{\infty} = 1$ and $\norm{e_{i+1}}_i=0$.
\end{example}

\begin{remark}\label{rem:convex_hull_algebraic_sum_not_equinormed}
It is also worth noting that if $A,B$ are equinormed subsets of a normed space $(E,\norm{\cdot}_E)$ equipped with a family $\{\norm{\cdot}_i\}_{i \in I}$ of semi-norms satisfying conditions~\ref{i} and~\ref{ii}, then the sets $\conv A$ and $A+B$ may fail to be equinormed.

In order to prove this, similarly to Example~\ref{ex:property_c_revised}, let us consider the space $c_0$ endowed with the standard norm $\norm{\cdot}_{\infty}$ and the family $\{\norm{\cdot}_i\}_{i \in \mathbb N}$ of semi-norms given by the formulae $\norm{x}_i:=\max_{1\leq n \leq i}\abs{\xi_n}$, where $i \in \mathbb N$ and $x=(\xi_n)_{n \in \mathbb N} \in c_0$. Moreover, let $y_k:=e_1+\ldots+e_k$, where $e_j$ is the $j$-th unit vector of $c_0$, and set $A=\dset{y_k}{k \in \mathbb N}\cup \dset{-y_k}{k \in \mathbb N}$. The set $A$ is equinormed as for each $\varepsilon>0$ and $k \in \mathbb N$ we have $\norm{y_k}_{\infty} = 1 \leq \varepsilon + 1\leq \varepsilon + \norm{y_k}_1$. However, $\conv A$ fails to be equinormed as for each $i \in \mathbb N$ we have $\norm{\frac{1}{2}y_{i+1}-\frac{1}{2}y_i}_{\infty}=\frac{1}{2}$ and $\norm{\frac{1}{2}y_{i+1}-\frac{1}{2}y_i}_{i}=0$.

The claim concerning algebraic sums can be shown in a similar manner, if one considers the sets $A=\dset{y_k}{k \in \mathbb N}$ and $B=\dset{-y_k}{k \in \mathbb N}$. Let us also add that another example showing that algebraic sums of equinormed sets may fail to be equinormed (even if one of the summands is a singleton) was given (in a slightly different context of the space $BV[0,1]$ and equivariated sets) by K.~Czudek and published by his permission in~\cite{BG20}*{Example~2}.

To sum up this remark we can say that equinormed sets behave well when we apply set-theoretic operations to them, while they fail miserably when we switch to algebraic operations.
\end{remark}

\begin{remark}
It turns out also that equinormed sets do not have to be bounded. To see this it suffices to consider a very simple example of the space $(\mathbb R, \abs{\cdot})$ equipped with the family of semi-norms consisting of a single element $\abs{\cdot}$ and the set of positive integers $\mathbb N$.  
\end{remark}

As the notion of an equinormed set depends only on the metric structure of a space, it is natural to expect that they will be preserved under isometries. It turns out, however, that this is not the case, as the following example shows.

\begin{example}
Let us consider the space $(c, \norm{\cdot}_{\infty})$ equipped with the family $\{\norm{\cdot}_i\}_{i \in \mathbb N}$ of semi-norms which for $i \in \mathbb N$ and $x=(\xi_n)_{n \in \mathbb N} \in c$ are given by $\norm{x}_i:=\max_{1\leq n \leq i}\abs{\xi_n}$. Set $x_k:=e_1+\sum_{n=1}^\infty (1-\frac{1}{n})e_{k+n-1}$ and $A:=\dset{x_k}{k \in \mathbb N}$, where $e_j:=(0,\ldots,0,1,0,\ldots)$ denotes the $j$-th unit vector of $c$. Furthermore, let $f \colon c \to c$ be given by $f(x)=x-e_1$. The mapping $f$ is clearly an isometry, that is, $\norm{f(x)-f(y)}_\infty=\norm{x-y}_\infty$ for $x,y \in c$. Note also that $\norm{f(x)}_\infty=\norm{x}_\infty$ for $x \in A$, even though $f$ is not linear. It is not difficult to check that the set $A$ is equinormed. However, the image of $A$ under $f$ is not equinormed, because $\norm{f(x_i)}_\infty=1$ and $\norm{f(x_{i})}_i=0$ for every $i \in \mathbb N$.
\end{example}

Our aim in this section is to prove two new general compactness criteria in normed spaces. However, instead of providing a long proof of one of those theorems (and a sets of tips how to prove the other one), we decided to state several ``smaller'' results, as they are of independent interest. Let us start with the following proposition showing that precompact and relatively compact sets are necessarily equinormed. 

\begin{proposition}\label{prop:relatively_compact_implies_equinormed}
Let $(E,\norm{\cdot}_E)$ be a normed space equipped with a family $\bigl\{\norm{\cdot}_i\bigr\}_{i \in I}$ of semi-norms satisfying conditions~\ref{i} and~\ref{ii}. 
If $A$ is a non-empty and precompact \textup(or relatively compact\textup) subset of $E$, then it is equinormed.
\end{proposition}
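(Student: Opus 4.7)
Since relative compactness implies precompactness, it suffices to treat the precompact case. Given $\varepsilon>0$, the plan is to reduce the problem from all of $A$ to a finite $(\varepsilon/3)$-net by invoking precompactness, settle the inequality $\|x\|_E \leq \varepsilon + \|x\|_k$ at the net points using condition~\ref{i}, and then use the directedness condition~\ref{ii} together with the triangle inequality to transfer the estimate to every element of $A$.

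More concretely, first I would fix $\varepsilon>0$ and choose points $a_1,\ldots,a_n \in A$ such that $A \subseteq \bigcup_{j=1}^n \ball_E(a_j,\varepsilon/3)$, which is possible by precompactness. Next, for each $j \in \{1,\ldots,n\}$ I would use condition~\ref{i}, namely $\|a_j\|_E = \sup_{i \in I}\|a_j\|_i$, to pick an index $i_j \in I$ with
\[
\|a_j\|_E \leq \|a_j\|_{i_j} + \frac{\varepsilon}{3}.
\]
Then, applying condition~\ref{ii} inductively to the finite collection $i_1,\ldots,i_n$, I would produce a single index $k \in I$ such that $\|x\|_{i_j} \leq \|x\|_k$ for all $x\in E$ and all $j$. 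In particular, $\|a_j\|_E \leq \|a_j\|_k + \varepsilon/3$ for every $j$.

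Finally, given an arbitrary $x \in A$, I would pick $j$ with $\|x-a_j\|_E \leq \varepsilon/3$ and, using that $\|\cdot\|_k \leq \|\cdot\|_E$ (a consequence of~\ref{i}), estimate
\[
\|x\|_E \leq \|x-a_j\|_E + \|a_j\|_E \leq \frac{\varepsilon}{3} + \|a_j\|_k + \frac{\varepsilon}{3} \leq \frac{2\varepsilon}{3} + \|a_j - x\|_k + \|x\|_k \leq \varepsilon + \|x\|_k,
\]
which is exactly the inequality required by Definition~\ref{def:equinormed}. I do not anticipate a genuine obstacle here: the argument is a standard interplay between a finite $\varepsilon$-net and the directedness condition~\ref{ii}, with the only mildly delicate point being the need to apply~\ref{ii} to finitely many indices simultaneously, which is handled by an obvious induction on $n$.
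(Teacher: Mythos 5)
Your proposal is correct and follows essentially the same route as the paper's proof: a finite $(\varepsilon/3)$-net, the estimate at the net points, and the same triangle-inequality transfer using $\norm{\cdot}_k \leq \norm{\cdot}_E$ from~\ref{i}. The only cosmetic difference is that the paper obtains the common index $k$ for the net points by citing Proposition~\ref{prop:properties_of_equinormed}~\ref{prop:properties_of_equinormed_a} (finite sets are equinormed), whereas you unwind that step explicitly via~\ref{i} and an induction on~\ref{ii}, which amounts to the same argument.
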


\begin{proof}
Of course, we may assume that $A$ is precompact (totally bounded). Then, for any given $\varepsilon>0$ there exists a finite collection of points $x_1,\ldots,x_m \in E$ with the property that $A \subseteq \ball_E(x_1,\frac{1}{3}\varepsilon)\cup\ldots\cup\ball_E(x_m,\frac{1}{3}\varepsilon)$. Furthermore, in view of Proposition~\ref{prop:properties_of_equinormed}~\ref{prop:properties_of_equinormed_a}, we can find a semi-norm $\norm{\cdot}_k$ such that $\norm{x_j}_E\leq \frac{1}{3}\varepsilon + \norm{x_j}_k$ for every $j \in \{1,\ldots,m\}$. Now, take an arbitrary $x \in A$. As the set $A$ is covered by the closed balls $\ball_E(x_j,\frac{1}{3}\varepsilon)$, there is an index $i \in\{1,\ldots,m\}$ such that $\norm{x-x_i}_E \leq \frac{1}{3}\varepsilon$, and hence
\begin{align*}
 \norm{x}_E&\leq \norm{x-x_i}_E + \norm{x_i}_E \leq \tfrac{1}{3}\varepsilon+\tfrac{1}{3}\varepsilon + \norm{x_i}_k\\
 &\leq \tfrac{2}{3}\varepsilon + \norm{x_i-x}_k + \norm{x}_k \leq \tfrac{2}{3}\varepsilon + \norm{x_i-x}_E + \norm{x}_k \leq \varepsilon + \norm{x}_k.
\end{align*}  
This shows that $A$ is equinormed and ends the proof.
\end{proof}

Since relative compactness and precompactness carry over to algebraic sums of sets (in the setting of normed spaces, of course), from Proposition~\ref{prop:relatively_compact_implies_equinormed} we immediately get the following corollary.

\begin{corollary}\label{cor:relatively_compact_implies_equinormed}
Let $(E,\norm{\cdot}_E)$ be a normed space equipped with a family $\bigl\{\norm{\cdot}_i\bigr\}_{i \in I}$ of semi-norms satisfying conditions~\ref{i} and~\ref{ii}. 
If $A$ is a non-empty and precompact \textup(or relatively compact\textup) subset of $E$, then the sets $A-A$ and $x+A$, where $x \in E$, are equinormed.
\end{corollary}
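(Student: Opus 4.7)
The plan is to deduce the corollary directly from Proposition~\ref{prop:relatively_compact_implies_equinormed}, so it suffices to verify that precompactness (respectively, relative compactness) of $A$ is inherited by $A-A$ and $x+A$, and then apply the proposition.

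For $x+A$, I would use that the translation $y \mapsto x+y$ is an isometry of $(E,\norm{\cdot}_E)$. Isometries preserve total boundedness, and $\overline{x+A}=x+\overline{A}$, so relative compactness is also preserved. Hence $x+A$ is precompact (resp.\ relatively compact) whenever $A$ is.

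For $A-A$, I would give a short $\varepsilon$-net argument. Given $\varepsilon>0$, by total boundedness of $A$ pick $x_1,\ldots,x_n\in E$ with $A \subseteq \bigcup_{k=1}^n \ball_E(x_k,\varepsilon/2)$. Then the finite family $\{x_k-x_l : 1\leq k,l\leq n\}$ is an $\varepsilon$-net for $A-A$, since for $x,y\in A$ one can select $k,l$ with $\norm{x-x_k}_E,\norm{y-x_l}_E\leq \varepsilon/2$, yielding
\[
 \norm{(x-y)-(x_k-x_l)}_E \leq \norm{x-x_k}_E + \norm{y-x_l}_E \leq \varepsilon.
\]
Thus $A-A$ is totally bounded, i.e.\ precompact. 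For the relatively compact case, I would instead invoke continuity of subtraction $E\times E\to E$: the image of the compact set $\overline{A}\times \overline{A}$ is compact, it contains $A-A$, and therefore $A-A$ is relatively compact.

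With both $x+A$ and $A-A$ shown to be precompact (resp.\ relatively compact), Proposition~\ref{prop:relatively_compact_implies_equinormed} immediately gives that they are equinormed. I do not expect any genuine obstacle here; the only point requiring a little care is to treat the precompact and relatively compact versions in parallel, but the $\varepsilon$-net argument and the continuous-image argument cover them respectively.
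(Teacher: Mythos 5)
Your proposal is correct and follows essentially the same route as the paper: the paper's proof is the one-line observation that precompactness and relative compactness carry over to algebraic sums of sets, after which Proposition~\ref{prop:relatively_compact_implies_equinormed} applies immediately. You have merely written out the standard details ($\varepsilon$-net for $A-A$, translation isometry for $x+A$, continuous image of $\overline{A}\times\overline{A}$) that the paper leaves implicit.
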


Actually, we will see later that under a natural condition on the family of semi-norms $\bigl\{\norm{\cdot}_i\bigr\}_{i \in I}$, precompactness of a bounded set $A$ is  equivalent to the fact that $A-A$ is equinormed. However, the same claim is not true in the case of relatively compact sets as evidenced by the following example. The main reason for this example to work is the non-completeness of the underlying normed space.

\begin{example}
Let us consider the space $c_{00}$ of real sequences with only finitely many non-zero terms endowed with the norm $\norm{\cdot}_{\infty}$ and the semi-norms $\norm{x}_i:=\max_{1\leq n \leq i}\abs{\xi_n}$, where $i \in \mathbb N$ and $x=(\xi_n)_{n \in \mathbb N} \in c_{00}$. Moreover, let us set $x_k:=(1,\frac{1}{2},\ldots,\frac{1}{k},0,0,\ldots)$ for $k \in \mathbb N$, and $A:=\dset{x_k}{k \in \mathbb N}$. This set is clearly bounded.

We shall prove now that the set $A-A$ is equinormed. Fix an arbitrary $\varepsilon>0$ and choose $i \in \mathbb N$ such that $\frac{1}{i+1}\leq \varepsilon$. Furthermore, take $x_l,x_m \in A$ and assume that $l<m$. We have $x_m-x_l=(0,\ldots,0,\frac{1}{l+1},\ldots,\frac{1}{m},0,0,\ldots)$. If $i>l$, then $\norm{x_m-x_l}_{\infty}=\frac{1}{1+l}=\norm{x_m-x_l}_{i} \leq \varepsilon + \norm{x_m-x_l}_{i}$. On the other hand, if $i\leq l$, then $\norm{x_m-x_l}_{\infty}= \frac{1}{l+1} \leq \frac{1}{i+1} \leq \varepsilon + \norm{x_m-x_l}_{i}$. Therefore, the set $A-A$ is equinormed.

Note, however, that $A$ is not relatively compact in $c_{00}$ as the sole candidate for the limit of any subsequence of $(x_k)_{k \in \mathbb N}$ is $(1,\frac{1}{2},\frac{1}{3},\ldots)$ which does not belong to $c_{00}$.    
\end{example}

Now, we are going to prove the main results of this section characterizing relatively compact and precompact subsets of certain normed spaces.

\begin{theorem}\label{thm:compactness_ver1}
Let $(E,\norm{\cdot}_E)$ be a normed space equipped with a family $\bigl\{\norm{\cdot}_i\bigr\}_{i \in I}$ of semi-norms satisfying conditions~\ref{i} and~\ref{ii}. Furthermore, assume that 
\begin{enumerate}[label=\textbf{\textup{(A\arabic*)}}]
\setcounter{enumi}{2}
 \item\label{iii} for each bounded sequence $(x_n)_{n \in \mathbb N}$ of elements of $E$ there is a subsequence $(x_{n_k})_{k \in \mathbb N}$ and a point $x \in E$ such that $\norm{x-x_{n_k}}_i \to 0$ as $k \to +\infty$ for every $i \in I$.
\end{enumerate}
Then, a non-empty subset $A$ of $E$ is relatively compact if and only if it is bounded and for each $x \in \bigcap_{i \in I}\overline{A}^i$ the set $A-x$ is equinormed\textup; here $\overline{A}^i$ denotes the closure of $A$ with respect to the semi-norm $\norm{\cdot}_i$.
\end{theorem}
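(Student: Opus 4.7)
The statement asks to characterize relative compactness under the additional assumption (A3), which roughly says that bounded sets are ``sequentially compact with respect to the weaker topology generated by the semi-norms''. I would prove each direction separately, with the easy direction coming from the proposition just above and the nontrivial direction being a two-step passage: first extract a subsequence converging in all the semi-norms via (A3), then upgrade that to norm convergence using the equinormed hypothesis.

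\textbf{Forward direction.} Suppose $A$ is relatively compact. Boundedness is automatic. Pick any $x \in \bigcap_{i\in I}\overline{A}^i$ (in fact any $x\in E$ will do here). Translation is an isometry, so $A-x$ is also relatively compact in $(E,\norm{\cdot}_E)$, and Proposition~\ref{prop:relatively_compact_implies_equinormed} applied to $A-x$ immediately yields that $A-x$ is equinormed.

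\textbf{Backward direction.} This is the main content. Assume $A$ is bounded and $A-x$ is equinormed for every $x\in\bigcap_{i\in I}\overline{A}^i$. To prove relative compactness of $A$, I would extract a convergent subsequence from an arbitrary sequence $(a_n)_{n\in\N}\subseteq A$. Since $A$ is bounded, (A3) supplies a subsequence $(a_{n_k})_{k\in\N}$ and some $x\in E$ with $\norm{a_{n_k}-x}_i\to 0$ for \emph{every} $i\in I$. The crucial (and essentially tautological) observation is that this forces $x\in\overline{A}^i$ for every $i\in I$, hence $x\in \bigcap_{i\in I}\overline{A}^i$; this is exactly why the hypothesis was formulated over that intersection rather than over all of $E$. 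By assumption, $A-x$ is therefore equinormed, so Proposition~\ref{prop:normseq} gives a sequence $(j_n)_{n\in\N}\subseteq I$ along which $\norm{y}_{j_n}\to\norm{y}_E$ uniformly in $y\in A-x$.

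Fix $\varepsilon>0$ and choose $N$ with $\norm{y}_E\leq \norm{y}_{j_N}+\varepsilon$ for all $y\in A-x$. Applied to $y=a_{n_k}-x\in A-x$, this gives
\[
\norm{a_{n_k}-x}_E \leq \norm{a_{n_k}-x}_{j_N}+\varepsilon.
\]
Letting $k\to\infty$ and using $\norm{a_{n_k}-x}_{j_N}\to 0$ (from (A3)) yields $\limsup_k \norm{a_{n_k}-x}_E\leq\varepsilon$; since $\varepsilon>0$ was arbitrary, $a_{n_k}\to x$ in $\norm{\cdot}_E$, and relative compactness of $A$ follows.

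\textbf{What to watch out for.} The only subtlety is recognising why the hypothesis is imposed only on translates $A-x$ with $x$ in the semi-norm closures: the limit supplied by (A3) is a priori only a ``semi-norm limit'', not a norm limit, so $x$ need not lie in $\overline{A}$, but it is automatically in each $\overline{A}^i$. This is what makes the equinormed assumption usable at the point $x$ produced by (A3), and it is the pivot that lets one promote semi-norm convergence to full $\norm{\cdot}_E$-convergence.
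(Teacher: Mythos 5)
Your proposal is correct and follows essentially the same route as the paper: the forward direction via Proposition~\ref{prop:relatively_compact_implies_equinormed} applied to the translate, and the backward direction by extracting a subsequence via~\ref{iii}, observing that the limit lies in $\bigcap_{i\in I}\overline{A}^i$, and then using the equinormed property of $A-x$ to upgrade semi-norm convergence to norm convergence. The only cosmetic difference is that you pass through Proposition~\ref{prop:normseq} where the paper invokes Definition~\ref{def:equinormed} directly; your explicit remark on why the hypothesis is restricted to $x\in\bigcap_{i\in I}\overline{A}^i$ is a point the paper leaves implicit.
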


\begin{proof}
We will prove only the ``left'' implication. The ``right'' one is a consequence of Corollary~\ref{cor:relatively_compact_implies_equinormed} and the fact that relatively compact sets are bounded.

Let $(x_n)_{n \in \mathbb N}$ be an arbitrary sequence of elements of the set $A$. By~\ref{iii} there is a subsequence $(x_{n_k})_{n \in \mathbb N}$ of  $(x_n)_{n \in \mathbb N}$ and a point $x \in E$ such that $\norm{x-x_{n_k}}_i \to 0$ as $k \to +\infty$ for every index $i \in I$. We will show that $x_{n_k} \to x$ with respect to the norm convergence in $E$, which will prove that the set $A$ is relatively compact in $E$. So, let us fix $\varepsilon>0$. As $x \in \bigcap_{i \in I}\overline{A}^i$ and the set $A-x$ is equinormed, there is a semi-norm $\norm{\cdot}_j$ such that $\norm{y-x}_E \leq \frac{1}{2}\varepsilon + \norm{y-x}_j$ for every $y \in A$. Moreover, let $k_0 \in \mathbb N$ be such that $\norm{x-x_{n_k}}_j \leq \frac{1}{2}\varepsilon$ for all $k \geq k_0$. Then, for all $k \geq k_0$ we have $\norm{x-x_{n_k}}_E \leq \tfrac{1}{2}\varepsilon + \norm{x-x_{n_k}}_j \leq \varepsilon$,
which shows that the subsequence $(x_{n_k})_{n \in \mathbb N}$ converges to $x$ in $E$.
\end{proof}

Before, we move to the second theorem two remarks are in order. 

\begin{remark}\label{rem:comacptness_ver1_every_x}
Note that, by Corollary~\ref{cor:relatively_compact_implies_equinormed}, in the claim of Theorem~\ref{thm:compactness_ver1} we may equivalently assume that $A-x$ is equinormed for each $x \in E$.
\end{remark}

\begin{remark}
In the simplest case when the family of semi-norms $\bigl\{\norm{\cdot}_i\bigr\}_{i \in I}$ consists of a single semi-norm $\norm{\cdot}_E$, condition~\ref{iii} asserts that each bounded subset of $E$ is relatively compact, meaning that the space $(E,\norm{\cdot}_E)$ must be finite dimensional. In other words, the convergence postulated in~\ref{iii} is essentially weaker than the norm convergence in $(E,\norm{\cdot}_E)$ unless $\dim E<+\infty$.  
\end{remark}

\begin{theorem}\label{thm:compactness_ver2}
Let $(E,\norm{\cdot}_E)$ be a normed space equipped with a family $\bigl\{\norm{\cdot}_i\bigr\}_{i \in I}$ of semi-norms satisfying conditions~\ref{i} and~\ref{ii}. Furthermore, assume that 
\begin{enumerate}[label=\textbf{\textup{(A\arabic*)}}]
\setcounter{enumi}{3}
 \item\label{iv} for each bounded sequence $(x_n)_{n \in \mathbb N}$ of elements of $E$ there is a subsequence $(x_{n_k})_{k \in \mathbb N}$ which is Cauchy with respect to each semi-norm $\norm{\cdot}_i$.
\end{enumerate}
Then, a non-empty subset $A$ of $E$ is precompact if and only if it is bounded and the set $A-A$ is equinormed.
\end{theorem}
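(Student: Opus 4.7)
The plan is to mirror the argument used for Theorem~\ref{thm:compactness_ver1}, but instead of extracting a convergent subsequence we will extract a Cauchy subsequence, exploiting the characterization of precompactness through the existence of Cauchy subsequences of arbitrary sequences.

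For the ``only if'' direction, if $A$ is precompact then it is totally bounded, hence bounded, and by Corollary~\ref{cor:relatively_compact_implies_equinormed} the set $A-A$ is equinormed. This part is essentially free.

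For the ``if'' direction, fix an arbitrary sequence $(x_n)_{n\in\mathbb N}$ of elements of $A$. By boundedness and~\ref{iv}, there is a subsequence $(x_{n_k})_{k\in\mathbb N}$ which is Cauchy with respect to every semi-norm $\norm{\cdot}_i$. I would then show that this subsequence is Cauchy with respect to $\norm{\cdot}_E$, which will imply that $A$ is precompact. Given $\varepsilon>0$, the hypothesis that $A-A$ is equinormed supplies an index $j\in I$ such that $\norm{y}_E\leq \tfrac{1}{2}\varepsilon + \norm{y}_j$ for every $y\in A-A$. Applying this with $y=x_{n_k}-x_{n_l}\in A-A$ and using the fact that $(x_{n_k})_{k\in\mathbb N}$ is Cauchy with respect to $\norm{\cdot}_j$, we choose $k_0$ so that $\norm{x_{n_k}-x_{n_l}}_j\leq \tfrac{1}{2}\varepsilon$ for all $k,l\geq k_0$, and conclude that $\norm{x_{n_k}-x_{n_l}}_E\leq \varepsilon$ for such $k,l$.

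There is no real obstacle here since the heavy lifting is already done: \ref{iv} delivers a subsequence that is Cauchy under each semi-norm, and the equinormed condition on $A-A$ is precisely what is needed to upgrade this to a Cauchy property in the ambient norm. The only subtlety worth flagging is that, in contrast to Theorem~\ref{thm:compactness_ver1}, we do \emph{not} need the limit point $x$ to lie in the intersection $\bigcap_{i\in I}\overline{A}^i$, because Cauchyness (as opposed to convergence) of $(x_{n_k}-x_{n_l})$ in each semi-norm follows directly from~\ref{iv} without reference to any candidate limit. This is exactly why precompactness, rather than relative compactness, is the correct conclusion in this setting.
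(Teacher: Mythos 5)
Your proposal is correct and follows essentially the same route as the paper: the ``only if'' direction via Corollary~\ref{cor:relatively_compact_implies_equinormed} and boundedness of precompact sets, and the ``if'' direction by extracting via~\ref{iv} a subsequence that is Cauchy for every semi-norm and then upgrading it to a $\norm{\cdot}_E$-Cauchy subsequence using the equinormedness of $A-A$. Your closing remark about why no candidate limit (and hence no condition involving $\bigcap_{i\in I}\overline{A}^i$) is needed here is an accurate observation about the contrast with Theorem~\ref{thm:compactness_ver1}.
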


\begin{proof}
As in the case of Theorem~\ref{thm:compactness_ver1} we will only focus on proving the ``left'' implication.

Let $(x_n)_{n \in \mathbb N}$ be an arbitrary sequence of elements of the set $A$. By~\ref{iv} there is a subsequence $(x_{n_k})_{n \in \mathbb N}$ of  $(x_n)_{n \in \mathbb N}$ which is Cauchy with respect to each semi-norm $\norm{\cdot}_i$. We will show that this subsequence is in fact Cauchy with respect to the norm $\norm{\cdot}_E$, which will prove that $A$ is a precompact subset of $E$. So, let us fix $\varepsilon>0$. As the set $A-A$ is equinormed, there is a semi-norm $\norm{\cdot}_j$ such that $\norm{y-x}_E \leq \frac{1}{2}\varepsilon + \norm{y-x}_j$ for all $x,y \in A$. Moreover, let $k_0 \in \mathbb N$ be such that $\norm{x_{n_k}-x_{n_l}}_j \leq \frac{1}{2}\varepsilon$ for all $k,l \geq k_0$. Then, for all $k,l \geq k_0$ we have $\norm{x_{n_k}-x_{n_l}}_E \leq \tfrac{1}{2}\varepsilon + \norm{x_{n_k}-x_{n_l}}_j \leq \varepsilon$, which shows that the subsequence $(x_{n_k})_{n \in \mathbb N}$ is Cauchy.
\end{proof}

It is well-known that in the setting of complete metric spaces the classes of relatively compact subsets and precompact subsets coincide. Thus, a natural question arises: Can Theorems~\ref{thm:compactness_ver1} and \ref{thm:compactness_ver2} be used interchangeably when we additionally assume that the normed space in question is complete? A partial negative answer to this question is given by the following example.

\begin{example}\label{ex:c_0_counterexample}
Let us consider the Banach space $c_0$ equipped with the norm $\norm{\cdot}_{\infty}$ and the family of semi-norms $\norm{x}_i:=\max_{1\leq n\leq i}\abs{\xi_n}$, where $i \in \mathbb N$ and $x=(\xi_n)_{n \in \mathbb N} \in c_0$. Clearly, this family satisfies conditions~\ref{i} and~\ref{ii}. Moreover, given a bounded sequence $(x_k)_{k \in \mathbb N}$ in $c_0$, using the diagonal procedure, we can find its subsequence $(x_{k_l})_{l\in \mathbb N}$ which converges component-wise (to some bounded sequence). This means that $(x_{k_l})_{l \in \mathbb N}$ is a Cauchy sequence with respect to each semi-norm $\norm{\cdot}_i$. In other words, the family $\bigl\{\norm{\cdot}_i\bigr\}_{i \in \mathbb N}$ of semi-norms satisfies condition~\ref{iv}, and we can apply Theorem~\ref{thm:compactness_ver2} to characterize relatively compact subset of $c_0$.

However, if we consider the bounded sequence $(y_k)_{k \in \mathbb N}$, where $y_k:=e_1+\ldots+e_k$ and $e_j$ is the $j$-th unit vector of $c_0$, then each its subsequence is component-wise convergent to the sequence $(1,1,1,\ldots) \notin c_0$. Consequently, condition~\ref{iii} is not satisfied for the family of semi-norms $\bigl\{\norm{\cdot}_i\bigr\}_{i \in \mathbb N}$ defined above, meaning that Theorem~\ref{thm:compactness_ver1} cannot be applied in this case. 
\end{example}

\begin{question}
Unfortunately, we were not able to provide a complete answer to the question whether for Banach spaces Theorems~\ref{thm:compactness_ver1} and \ref{thm:compactness_ver2} can be used interchangeably. We do not know if it is possible to give an example of a Banach space $(E,\norm{\cdot}_E)$ with the following properties:
\begin{enumerate}[label=\textup{(\roman*)}]
 \item there is a family of semi-norms $\bigl\{\norm{\cdot}_i\bigr\}_{i \in I}$ on $E$ which satisfies conditions~\ref{i},~\ref{ii} and~\ref{iv},

 \item \emph{no} family of semi-norms $\bigl\{\norm{\cdot}_j\bigr\}_{j \in J}$ on $E$ satisfies  conditions~\ref{i},~\ref{ii} and~\ref{iii}.
\end{enumerate} 
\end{question}

\section{Compactness criterion in $l^p$-spaces}

One way to prove the classical and well-known compactness criterion in the space $l^p$ (in this paper it is stated as Theorem~\ref{thm:compactness_criterion_in_lp} below) is, first, to find a formula for the Hausdorff measure of non-compactness of a bounded subset of $l^p$, and then to deduce the criterion from it. (Readers interested in deriving the formula for the Hausdorff measure of non-compactness in $l^p$ are referred to e.g.~\cite{AKPRS}*{Section~1.1.9} or~\cite{BBK}*{Example~1.1.10}.) We will take another route and we will use one of our abstract results from the previous section, thus illustrating its applicability and as a by-product providing a new proof of this criterion.

\begin{theorem}\label{thm:compactness_criterion_in_lp}
Let $1 \leq p < +\infty$. A non-empty and bounded subset $A$ of $l^p$ is relatively compact if and only if for each $\varepsilon>0$ there exists an index $n \in \mathbb N$ such that $\sup_{x \in A} \bigl(\sum_{k=n+1}^\infty \abs{\xi_k}^p\bigr)^{1/p} \leq \varepsilon$, where $x=(\xi_k)_{k \in \mathbb N}$.
\end{theorem}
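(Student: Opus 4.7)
The plan is to apply Theorem~\ref{thm:compactness_ver2} to $E=l^p$ endowed with the natural family of semi-norms
\[
\norm{x}_n := \biggl(\sum_{k=1}^n \abs{\xi_k}^p\biggr)^{1/p}, \qquad n \in \N,
\]
where $x=(\xi_k)_{k\in\N}\in l^p$. Since $l^p$ is a Banach space, precompactness and relative compactness coincide, so Theorem~\ref{thm:compactness_ver2} will give exactly the desired characterization once the tail condition appearing in the statement is translated into equinormedness of $A-A$.

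Verifying the hypotheses on the family is routine: \ref{i} is monotone convergence of partial sums $\norm{x}_n \nearrow \norm{x}_{l^p}$, and \ref{ii} is immediate since the family is totally ordered by $n$. For~\ref{iv}, given a bounded sequence $(x^{(m)})_{m\in\N}$ in $l^p$, each coordinate is bounded in $\R$, and a standard Cantor diagonal argument extracts a subsequence $(x^{(m_j)})_{j\in\N}$ that converges coordinatewise. Since each semi-norm $\norm{\cdot}_n$ depends only on finitely many coordinates, this subsequence is Cauchy with respect to every $\norm{\cdot}_n$.

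The main step is to identify equinormedness of $A-A$ with the tail condition. Setting $T_n(y):=\bigl(\sum_{k>n}\abs{\eta_k}^p\bigr)^{1/p}$ for $y=(\eta_k)\in l^p$, the identity $\norm{y}_{l^p}^p=\norm{y}_n^p+T_n(y)^p$ together with the elementary inequality $(a^p+b^p)^{1/p}\le a+b$ (valid for $a,b\ge 0$ and $p\ge 1$) shows that smallness of $T_n$ on a set yields equinormedness. Conversely, if $\norm{y}_{l^p}\le \varepsilon+\norm{y}_n$ for every $y$ in a set bounded by $M$, then a mean value estimate applied to $t\mapsto t^p$ on $[\norm{y}_n,\norm{y}_n+\varepsilon]$ gives
\[
T_n(y)^p \le (\norm{y}_n+\varepsilon)^p-\norm{y}_n^p \le p(M+\varepsilon)^{p-1}\varepsilon,
\]
so equinormedness implies smallness of $T_n$. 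Finally, one passes between the tail condition on $A$ (what the theorem states) and on $A-A$ (what Theorem~\ref{thm:compactness_ver2} requires) for bounded $A$: the implication $A\Rightarrow A-A$ follows from Minkowski, while the reverse follows by fixing any $b_0\in A$ and using that $T_n(b_0)\to 0$ since $b_0\in l^p$.

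No step is a serious obstacle; the only genuine technicality is the mean value estimate relating the linear-in-$\norm{\cdot}_n$ equinormedness inequality to the tail quantity $T_n$ (which involves $p$-th powers), and this must be handled carefully when $p>1$. Once this translation is in hand, Theorem~\ref{thm:compactness_ver2} yields the result immediately.
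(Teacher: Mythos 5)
Your proof is correct, but it routes through Theorem~\ref{thm:compactness_ver2} where the paper uses Theorem~\ref{thm:compactness_ver1}. The paper verifies condition~\ref{iii} for the partial-sum semi-norms: it extracts a coordinatewise convergent subsequence from a bounded sequence and then checks (via the uniform bound $M$) that the coordinatewise limit actually lies in $l^p$, so that the limit point $x$ required by~\ref{iii} exists; relative compactness of $A$ is then equivalent to $A-y$ being equinormed for every $y\in l^p$, and the paper translates this into the tail condition by treating $y=0$ for necessity and an arbitrary $y$ (truncated using $p$-summability of $y$) for sufficiency. You instead verify the weaker condition~\ref{iv} (Cauchy in each semi-norm, which needs no limit point and hence no Fatou-type argument), obtain precompactness of $A$ from equinormedness of $A-A$, and invoke completeness of $l^p$ to upgrade to relative compactness; the passage between the tail condition on $A$ and on $A-A$ is handled by Minkowski in one direction and by fixing $b_0\in A$ with $T_n(b_0)\to 0$ in the other. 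The key analytic estimate is the same in both arguments: your mean value bound $(\norm{y}_n+\varepsilon)^p-\norm{y}_n^p\le p(M+\varepsilon)^{p-1}\varepsilon$ is exactly the paper's Lemma~\ref{lem:inequality} with $b=\varepsilon$. What your route buys is a slightly lighter verification of the abstract hypothesis (no need to identify the limit inside $l^p$) at the cost of explicitly using completeness of $l^p$; what the paper's route buys is a statement that would survive in an incomplete subspace setting and a cleaner quantifier structure ($A-y$ for each fixed $y$ rather than the doubled set $A-A$). Both are complete proofs.
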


Before we proceed to the proof of Theorem~\ref{thm:compactness_criterion_in_lp} we need the following lemma.

\begin{lemma}\label{lem:inequality}
For $a,b \geq 0$ and $p\geq 1$ we have $(a+b)^p \leq a^p + pb(a+b)^{p-1}$.
\end{lemma}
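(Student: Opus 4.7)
The plan is to reduce the inequality to an elementary property of the convex function $f(t)=t^p$ on $[0,+\infty)$. I would first dispose of the trivial case $b=0$, for which both sides equal $a^p$ and equality holds.

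For $b>0$, the cleanest route is the mean value theorem applied to $f$ on the interval $[a,a+b]$: there exists $c\in(a,a+b)$ with
\[
(a+b)^p-a^p=f'(c)\cdot b=p\,c^{p-1}\,b.
\]
Since $p-1\geq 0$ and $c\leq a+b$, one has $c^{p-1}\leq(a+b)^{p-1}$, and substituting gives $(a+b)^p-a^p\leq p\,b\,(a+b)^{p-1}$, which is the required inequality.

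An equivalent approach, avoiding the mean value theorem, is to invoke convexity of $f$ directly: the graph of $f$ lies above its tangent line at the point $t=a+b$, namely
\[
t^p\geq (a+b)^p+p(a+b)^{p-1}\bigl(t-(a+b)\bigr)\quad\text{for all } t\geq 0.
\]
Evaluating at $t=a$ and rearranging yields exactly $(a+b)^p\leq a^p+p\,b\,(a+b)^{p-1}$.

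There is no genuine obstacle; the statement is elementary, and either of the two one-line arguments above suffices. I would prefer the convexity/tangent-line version since it avoids invoking an intermediate point and keeps the constants visible.
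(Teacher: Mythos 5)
Your proof is correct, and both of your one-line arguments (mean value theorem, tangent-line inequality for the convex function $t\mapsto t^p$) are sound. The paper takes essentially the same route: it assumes $a>0$, reduces by homogeneity to $(1+x)^p\leq 1+px(1+x)^{p-1}$ for $x\geq 0$, and checks that ``using differential calculus'' --- which is exactly your tangent-line inequality at the point $1+x$ evaluated at $t=1$, so your version merely makes explicit what the paper leaves as an exercise.
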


\begin{proof}
Note that we may assume that $a>0$, since otherwise there is nothing to prove. Using differential calculus, it is easy to check that $(1+x)^p \leq 1 + px(1+x)^{p-1}$ for $x\geq 0$. Then, it suffices to substitute $x=b/a$ and get $(a+b)^p \leq a^p + pb(a+b)^{p-1}$.
\end{proof}

\begin{proof}[Proof of Theorem~\ref{thm:compactness_criterion_in_lp}]
For each $i \in \mathbb N$ let $\norm{x}_i:=\bigl(\sum_{k=1}^i \abs{\xi_k}^p\bigr)^{1/p}$, where $x=(\xi_k)_{k \in \mathbb N} \in l^p$. It is not difficult to check that the family  $\bigl\{\norm{\cdot}_i\bigr\}_{i \in \mathbb N}$ satisfies conditions~\ref{i} and~\ref{ii}. Also  condition~\ref{iii} is satisfied. Indeed, given a bounded sequence $(x_n)_{n \in \mathbb N}$ in $l^p$, we can find its subsequence $(x_{n_m})_{m \in \mathbb N}$ which is convergent component-wise to a sequence $x_\ast$. As for each $i \in \mathbb N$ and sufficiently large $n_m$ (depending on $i$) we have
\[
 \Biggl( \sum_{k=1}^i \abs{\xi_k^\ast}^p\Biggr)^{1/p} \leq \Biggl( \sum_{k=1}^i \abs{\xi_k^\ast-\xi_k^{n_m}}^p\Biggr)^{1/p} + \Biggl( \sum_{k=1}^i \abs{\xi_k^{n_m}}^p\Biggr)^{1/p} \leq 1 +M,
\]  
where $M>0$ is a fixed bound for the sequence $(x_n)_{n \in \mathbb N}$ and $\xi_k^\ast, \xi_k^{n_m}$ denote the terms of the sequences $x_\ast$ and $x_{n_m}$, respectively, we infer that $x_\ast \in l^p$. The fact that $\lim_{m \to \infty}\norm{x_{n_m}-x_\ast}_i = 0$ for every $i \in \mathbb N$ is obvious. Thus, by Theorem~\ref{thm:compactness_ver1} a non-empty and bounded subset $A$ of $l^p$ is relatively compact if and only if $A-y$ is equinormed for every $y \in l^p$ (cf.~Remark~\ref{rem:comacptness_ver1_every_x}). So, to end the proof of the compactness criterion we need to show that the above condition is equivalent with the condition stated in Theorem~\ref{thm:compactness_criterion_in_lp}.

Let us assume that for each $y \in  l^p$ the set $A-y$ is equinormed. Then, for $y=0$ and a given number $\varepsilon \in (0,1)$ there exists $N \in \mathbb N$ such that 
\[
 \norm{x}_{l^p} \leq \norm{x}_N + \frac{\varepsilon^p}{p(1+R)^{p-1}} 
\]
for any $x \in A$, where $R>0$ is such that $A\subseteq \ball_{l^p}(0,R)$. Using Lemma~\ref{lem:inequality}, for any $x \in A$, we have
\begin{align*}
 \sum_{k=1}^{N} \abs{\xi_k}^p + \sum_{k=N+1}^{\infty} \abs{\xi_k}^p & \leq \biggl(\norm{x}_N + \frac{\varepsilon^p}{p(1+R)^{p-1}}\biggr)^p\\
& \leq \norm{x}_N^p + \frac{\varepsilon^p}{(1+R)^{p-1}} \cdot \biggl(\norm{x}_N + \frac{\varepsilon^p}{p(1+R)^{p-1}}\biggr)^{p-1}\\
& \leq  \sum_{k=1}^{N} \abs{\xi_k}^p + \frac{\varepsilon^p}{(1+R)^{p-1}} \cdot (R+1)^{p-1}\\
& = \sum_{k=1}^{N} \abs{\xi_k}^p + \varepsilon^p.
\end{align*}
Thus, $\sup_{x \in A}\bigl(\sum_{k=N+1}^{\infty} \abs{\xi_k}^p\bigr)^{1/p} \leq \varepsilon$.

Now, let $y=(\eta_k)_{k \in \mathbb N} \in l^p$ be an arbitrary sequence. Moreover, let us fix $\varepsilon>0$ and let assume that we can find $n \in \mathbb N$ such that $\sup_{x \in A} \bigl(\sum_{k=n+1}^\infty \abs{\xi_k}^p\bigr)^{1/p} \leq \frac{1}{2}\varepsilon$. As the sequence $y$ is $p$-summable, there is an index $N\geq n$ such that $\bigl(\sum_{k=N+1}^\infty \abs{\eta_k}^p\bigr)^{1/p} \leq \frac{1}{2}\varepsilon$. Hence, for each $x \in A$ we have
\begin{align*}
 \norm{x-y}_{l^p} &= \Biggl(\sum_{k=1}^\infty \abs{\xi_k-\eta_k}^p\Biggr)^{1/p}\\
 & \leq \Biggl(\sum_{k=1}^N \abs{\xi_k-\eta_k}^p\Biggr)^{1/p} + \Biggl(\sum_{k=N+1}^\infty \abs{\xi_k-\eta_k}^p\Biggr)^{1/p}\\
 & \leq \norm{x-y}_N + \Biggl(\sum_{k=N+1}^\infty \abs{\xi_k}^p\Biggr)^{1/p}+\Biggl(\sum_{k=N+1}^\infty \abs{\eta_k}^p\Biggr)^{1/p}\\
 & \leq \varepsilon + \norm{x-y}_N,
\end{align*}
which shows that the set $A-y$ is equinormed. This completes the proof.
\end{proof}

\section{Compactness criterion in $C(X,\R)$}

Similar to what we did in the last section, we are now going to apply our abstract machinery to obtain the famous Arzel\`a--Ascoli theorem. This time, however, we are planning to use Theorem~\ref{thm:compactness_ver2}. 

Let us start with some general remarks concerning the space $C(X,\mathbb R)$. If $\card X<+\infty$, then $C(X,\R)$ is finite-dimensional and forms an equicontinuous family of functions. Consequently, this case is not very interesting, and we may assume that $\card X=+\infty$. It is well-known that compact metric spaces are separable, and so $X$ contains a countable and dense sequence (subset) $T := (t_j)_{j \in \mathbb N}$. Let us now consider the family $\bigl\{\norm{\cdot}_i\bigr\}_{i \in \mathbb N}$ of semi-norms on $C(X,\R)$, where
\begin{equation}\label{eq:semi_norms_on_C}
 \text{$\norm{x}_i:=\max_{1\leq j\leq i}\abs{x(t_j)}$ for each $i \in \mathbb N$ and $x \in C(X,\R)$.}
\end{equation}  
It is not difficult to check that this family satisfies conditions~\ref{i},~\ref{ii} and~\ref{iv}. (Condition~\ref{iv} is a consequence of the diagonal procedure.) However, it does not necessarily  satisfy condition~\ref{iii} as evidenced by the following example. 

\begin{example}\label{ex:equinormed}
Let $X=[0,1]$ and let us consider the Banach space $C[0,1]$ of all real-valued continuous functions defined on the interval $[0,1]$. Moreover, let the sequence $T$ consist of all dyadic rationals in $[0,1]$, that is, the numbers of the form $m/2^{l}$, where $l \in \mathbb N$, $m \in \mathbb N \cup\{0\}$ and $0\leq m \leq 2^l$, arranged in an arbitrary (but fixed) order. Suppose that for the bounded sequence $(x_n)_{n \in \mathbb N}$, where $x_n(t):=t^n$, there is a subsequence $(x_{n_k})_{k \in \mathbb N}$ and a continuous function $x \colon [0,1] \to \mathbb R$ such that $\norm{x_{n_k}-x}_i \to 0$ as $k \to +\infty$ for every $i \in \mathbb N$. As the convergence with respect to the semi-norms $\norm{\cdot}_i$ implies the pointwise convergence on $T$, we infer that $x(1)=1$ and $x(t)=0$ for $t \in T\setminus\{1\}$. But this is impossible, since $T$ is dense in $[0,1]$ and $x$ is continuous. 
\end{example}

\begin{remark}\label{rmk:equinormed}
Let us ponder over the above example a little bit more. By $A$ let us denote the subset of $C[0,1]$ consisting of all the functions $x_n$ from Example~\ref{ex:equinormed}. It is clear that $A$ is not equicontinuous. Furthermore, by direct calculations or Proposition~\ref{prop:equinormed_impies_equicontinuous}, which is proved below, one can check that the algebraic difference $A-A$ is not equinormed, although the set $A$ is (cf. Remark \ref{rem:convex_hull_algebraic_sum_not_equinormed} and \cite{BG20}*{Example~2}). Finally, observe that the sequence $(x_n)_{n\in \mathbb N}$ admits no convergent subsequence in $C[0,1]$.
\end{remark}

\begin{proposition}\label{prop:equicontinuous_impies_equinormed}
Let $X$ be a compact metric space with $\card X=+\infty$ and a dense sequence \textup(subset\textup) $T=(t_j)_{j \in \mathbb N}$. Moreover, let $\bigl\{\norm{\cdot}_i\bigr\}_{i \in \mathbb N}$ be the family of semi-norms on $C(X,\R)$ defined by~\eqref{eq:semi_norms_on_C}. If $A$ is a non-empty and equicontinuous subset of $C(X,\R)$, then it is equinormed.
\end{proposition}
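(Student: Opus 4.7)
The plan is to exploit equicontinuity together with the compactness of $X$ and the density of $T$ to produce, for each $\varepsilon>0$, a finite collection of points from $T$ whose $\delta$-neighborhoods cover $X$; the largest index of these points will then serve as the required index $k$ in Definition~\ref{def:equinormed}.

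More precisely, I would fix $\varepsilon>0$ and use the equicontinuity of $A$ to obtain $\delta>0$ such that $|x(s)-x(t)|<\varepsilon$ for every $x\in A$ whenever $d(s,t)<\delta$. Since $T$ is dense in $X$, the open balls $\{B(t_j,\delta)\}_{j\in\mathbb N}$ form an open cover of $X$, and by compactness we may extract a finite subcover, say indexed by $j\in\{1,\dots,k\}$ after relabelling (or more carefully, by some finite set $J\subseteq\mathbb N$; then set $k:=\max J$, noting that passing from $J$ to $\{1,\dots,k\}$ only enlarges the semi-norm $\|\cdot\|_k$, which is fine). Then for any $x\in A$ and any $t\in X$, choosing $j\le k$ with $d(t,t_j)<\delta$, we get
\[
|x(t)| \le |x(t)-x(t_j)| + |x(t_j)| \le \varepsilon + \|x\|_k.
\]
Taking the supremum over $t\in X$ yields $\|x\|_\infty \le \varepsilon + \|x\|_k$ for every $x\in A$, which is exactly the equinormed condition.

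There is essentially no obstacle here; the proof is a direct translation of the standard Arzel\`a--Ascoli bookkeeping into the equinormed framework. The only minor subtlety is ensuring that the finite subcover is centered at points $t_j$ with indices forming an initial segment (or at least bounded) of $\mathbb N$, which is harmless thanks to condition~\ref{ii}: the monotonicity of $\|\cdot\|_i$ in $i$ (built into its definition as $\max_{1\le j\le i}$) means that replacing the finite index set $J$ by $\{1,\dots,\max J\}$ can only increase the relevant semi-norm, preserving the inequality.
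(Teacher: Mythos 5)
Your proof is correct and follows essentially the same argument as the paper: equicontinuity gives $\delta$, density of $T$ plus compactness of $X$ gives a finite subcover by $\delta$-balls centered at $t_1,\dots,t_k$, and the triangle inequality yields $\norm{x}_\infty\leq\varepsilon+\norm{x}_k$ on $A$. The only cosmetic difference is that the paper uses the fact that $\norm{x}_\infty$ is attained at some $t_\ast\in X$ rather than taking a supremum over all $t$, which changes nothing.
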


\begin{proof}
Let $\varepsilon>0$ be fixed. Since the set $A$ is equicontinuous, there exists $\delta>0$ such that $\sup_{x \in A}\abs{x(t)-x(s)}\leq \varepsilon$ whenever $t,s \in X$ satisfy the condition $d(t,s)\leq \delta$. As $T$ is dense in $X$, we can find an index $k \in \mathbb N$ such that $X=\ball_X(t_1,\delta)\cup \ldots \cup \ball_X(t_k,\delta)$. Now, if we take any $x \in A$, we know that $\norm{x}_\infty = \abs{x(t_\ast)}$ for some $t_\ast \in X$ and that $t_\ast \in \ball_X(t_j,\delta)$ for some $j \in \{1,\ldots,k\}$. Then, $
 \norm{x}_\infty = \abs{x(t_\ast)} \leq \abs{x(t_j)}+\abs{x(t_\ast)-x(t_j)} \leq \norm{x}_k + \varepsilon$. This shows that the set $A$ is equinormed and ends the proof.
\end{proof}

In a similar vein to Corollary~\ref{cor:relatively_compact_implies_equinormed}, because the algebraic sum of two equicontinuous sets is equicontinuous from Proposition~\ref{prop:equicontinuous_impies_equinormed} we obtain the following result.

\begin{corollary}\label{cor:A_EC_impies_A_A_equinormed}
Let $X$ be a compact metric space with $\card X=+\infty$ and a dense sequence \textup(subset\textup) $T=(t_j)_{j \in \mathbb N}$. Moreover, let $\bigl\{\norm{\cdot}_i\bigr\}_{i \in \mathbb N}$ be the family of semi-norms on $C(X,\R)$ defined by~\eqref{eq:semi_norms_on_C}. If $A$ is a non-empty and equicontinuous subset of $C(X,\R)$, then the set $A-A$ is equinormed.
\end{corollary}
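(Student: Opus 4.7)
The plan is to reduce the claim directly to Proposition~\ref{prop:equicontinuous_impies_equinormed} by showing that the set $A-A$ inherits equicontinuity from $A$, and then applying that proposition to $A-A$ itself. The hint embedded in the statement, namely that ``the algebraic sum of two equicontinuous sets is equicontinuous'', is precisely what makes this reduction work.

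The first step is to verify equicontinuity of $A-A$. Fix $\varepsilon>0$. Since $A$ is equicontinuous on the compact metric space $X$, there exists $\delta>0$ such that $\abs{x(t)-x(s)}\leq \varepsilon/2$ for every $x \in A$ whenever $t,s\in X$ satisfy $d(t,s)\leq \delta$. Then, for any two elements $x,y \in A$ and any $t,s \in X$ with $d(t,s)\leq \delta$, the triangle inequality gives
\[
\abs{(x-y)(t)-(x-y)(s)} \leq \abs{x(t)-x(s)} + \abs{y(t)-y(s)} \leq \tfrac{1}{2}\varepsilon + \tfrac{1}{2}\varepsilon = \varepsilon.
\]
Since $\delta$ does not depend on the particular choice of $x$ and $y$ from $A$, this means the family $A-A = \dset{x-y}{x,y \in A}$ is equicontinuous on $X$.

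The second step is an immediate application of Proposition~\ref{prop:equicontinuous_impies_equinormed}: that proposition asserts that every non-empty equicontinuous subset of $C(X,\mathbb R)$ is equinormed with respect to the family of semi-norms $\bigl\{\norm{\cdot}_i\bigr\}_{i\in\mathbb N}$ defined in~\eqref{eq:semi_norms_on_C}. Since $A-A$ is non-empty (it contains $0$) and, by the previous step, equicontinuous, applying the proposition to $A-A$ yields that $A-A$ is equinormed, which is exactly the statement of the corollary.

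There is no serious obstacle here: the argument is just a two-line triangle inequality followed by an invocation of the preceding proposition. The only thing worth double-checking is that the preceding proposition applies whether or not $A-A$ is symmetric or contains the origin, but that is immediate from the formulation of Proposition~\ref{prop:equicontinuous_impies_equinormed}, which requires only non-emptiness and equicontinuity.
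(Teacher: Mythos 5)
Your proposal is correct and follows exactly the route the paper takes: the paper derives the corollary from Proposition~\ref{prop:equicontinuous_impies_equinormed} by observing that the algebraic difference of equicontinuous sets is equicontinuous, which is precisely your triangle-inequality step. Nothing is missing.
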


At this point it is obvious that the classes of equicontinuous and equinormed subsets of $C(X,\mathbb R)$ do not coincide (cf.~Example~\ref{ex:equinormed} and Remark~\ref{rmk:equinormed}). However, the above corollary raises another natural question: given a non-empty subset $A$ of $C(X,\R)$ is it true that $A$ is equicontinuous provided that $A-A$ is equinormed. The negative answer to this question is given by the following example.

\begin{example}
Once again let us consider the space $C[0,1]$ endowed with the family $\bigl\{\norm{\cdot}_i\bigr\}_{i \in \mathbb N}$ given by~\eqref{eq:semi_norms_on_C}, where $T=(t_j)_{j \in \mathbb N}$ is the sequence of all dyadic rational numbers contained in the interval $[0,1]$ arranged in a fixed order. Moreover, let $A$ be the set consisting of all affine functions $x_n \colon [0,1] \to \mathbb R$ of the form $x_n(t):=nt$, where $n \in \mathbb N$. As $1\in T$, we have $t_k=1$ for some $k \in \mathbb N$. Hence, for any $m,n \in \mathbb N$ we have
\[
 \norm{x_n-x_m}_\infty = \sup_{t \in [0,1]}\abs{nt-mt}=\abs{n-m}=t_k\abs{n-m}=\norm{x_n-x_m}_k.
\]
This shows that the set $A-A$ is equinormed. However, for any distinct $t,s \in [0,1]$, we have
\[
 \lim_{n \to \infty}\abs{x_n(t)-x_n(s)}=\lim_{n \to \infty}n\abs{t-s} = +\infty,
\]
which proves that $A$ is not an equicontinuous family of functions. 

As a closing remark, note that in the above reasoning we did not use the fact that $T$ consists of all dyadic rationals contained in $[0,1]$. All we needed was that $1$ was one of the terms of the sequence $T$.
\end{example}

The above example works because the set $A$ is unbounded in $C[0,1]$. It turns out, however, that adding the boundedness assumption renders the construction impossible. 

\begin{proposition}\label{prop:equinormed_impies_equicontinuous}
Let $X$ be a compact metric space with $\card X=+\infty$ and a dense sequence \textup(subset\textup) $T=(t_j)_{j \in \mathbb N}$. Moreover, let $\bigl\{\norm{\cdot}_i\bigr\}_{i \in \mathbb N}$ be the family of semi-norms on $C(X,\R)$ defined by~\eqref{eq:semi_norms_on_C}. If $A$ is a non-empty and bounded subset of $C(X,\R)$ such that $A-A$ is equinormed, then it is equicontinuous.
\end{proposition}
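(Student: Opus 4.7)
The plan is to produce equicontinuity directly by a finite-dimensional approximation argument, bypassing Theorem~\ref{thm:compactness_ver2}. (That theorem would yield precompactness of $A$, and, since $C(X,\R)$ is complete, even relative compactness; but passing from relative compactness to equicontinuity would require the Arzel\`a--Ascoli theorem, which is precisely what this section aims to reprove.) The core idea is that the equinormed hypothesis on $A-A$ replaces the sup-norm, which sees all of $X$, by a semi-norm $\norm{\cdot}_k$ that evaluates only at the finitely many sample points $t_1,\ldots,t_k$; combined with boundedness of $A$, this reduces $A$, up to a small sup-norm error, to a finite subset of $C(X,\R)$ whose own equicontinuity then transfers back to all of $A$.

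Fix $\varepsilon>0$. First I would apply Definition~\ref{def:equinormed} to the equinormed set $A-A$ with parameter $\varepsilon/5$ to obtain an index $k\in\N$ such that
\[
 \norm{f-g}_\infty \leq \tfrac{\varepsilon}{5} + \max_{1\leq j\leq k}\abs{f(t_j)-g(t_j)} \qquad \text{for all } f,g\in A.
\]
Because $A$ is bounded in $C(X,\R)$, the projection $\dset{(x(t_1),\ldots,x(t_k))}{x\in A}$ is a bounded, and hence totally bounded, subset of $\R^k$ in the max-norm. I can therefore pick finitely many $x_1,\ldots,x_N\in A$ such that every $x\in A$ satisfies $\norm{x-x_i}_k \leq \varepsilon/5$ for some $i\in\{1,\ldots,N\}$; the displayed inequality then upgrades this to $\norm{x-x_i}_\infty \leq 2\varepsilon/5$.

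Each $x_i$ is uniformly continuous on the compact metric space $X$, and there are only finitely many of them, so I may fix a single $\delta>0$ with $\abs{x_i(s)-x_i(t)} \leq \varepsilon/5$ for every $i$ whenever $s,t\in X$ satisfy $d(s,t)<\delta$. For an arbitrary $x\in A$ and such $s,t$, choosing $i$ as above, the triangle inequality yields
\[
 \abs{x(s)-x(t)} \leq 2\norm{x-x_i}_\infty + \abs{x_i(s)-x_i(t)} \leq \tfrac{4\varepsilon}{5} + \tfrac{\varepsilon}{5} = \varepsilon,
\]
which gives the desired equicontinuity of $A$. I do not foresee a real technical obstacle; the whole argument is a straightforward $\varepsilon$-accounting once one commits to the finite-dimensional reduction, and the only conceptual pitfall is the one flagged above about not short-circuiting via Theorem~\ref{thm:compactness_ver2}.
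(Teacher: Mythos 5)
Your proof is correct and is essentially the paper's argument: both exploit boundedness of $A$ to extract a finite subset $\{x_1,\ldots,x_N\}\subseteq A$ that is an $\varepsilon/5$-net for $A$ in the semi-norm $\norm{\cdot}_k$, upgrade $\norm{\cdot}_k$-closeness to $\norm{\cdot}_\infty$-closeness via the equinormed inequality for $A-A$, and then transfer the (automatic) equicontinuity of the finite net to all of $A$ by the triangle inequality. The only difference is cosmetic: where you invoke total boundedness of the bounded projection $\dset{(x(t_1),\ldots,x(t_k))}{x\in A}\subseteq\R^k$, the paper constructs the same finite net explicitly by a pigeonhole partition of $A$ according to which of finitely many small intervals each value $x(t_j)$ falls into.
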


In the proof of Proposition~\ref{prop:equinormed_impies_equicontinuous} we will be using some ideas which can be traced back to Ambrosetti and his famous lemma concerning measures of non-compactness in the Banach space $C([a,b],E)$ of vector-valued continuous functions; for more details about this result see e.g. the original paper~\cite{Ambrosetti} or the monograph~\cite{BBK}*{Lemma~1.2.8}.

\begin{proof}[Proof of Proposition~\ref{prop:equinormed_impies_equicontinuous}]
Fix $\varepsilon>0$. As the set $A-A$ is equinormed, there exists an index $k \in \mathbb N$ such that $\norm{x-y}_\infty \leq \frac{1}{5}\varepsilon + \norm{x-y}_k$ for $x,y \in A$. Furthermore, let $M>0$ be such that $\norm{x}_\infty \leq M$ for $x \in A$ and choose a finite family of intervals $J_1,\ldots,J_n$ of length not exceeding $\frac{1}{5}\varepsilon$ such that $[-M,M]\subseteq J_1\cup\ldots \cup J_{n}$. Let $\Phi$ be the finite set of all functions $\varphi \colon \{1,\ldots,k\} \to \{1,\ldots,n\}$ and for each $\varphi \in \Phi$ set $A_\varphi:=\dset{x\in A}{\text{$x(t_j)\in J_{\varphi(j)}$ for every $j=1,\ldots,k$}}$. Clearly, some of the sets $A_\varphi$ may be empty, nonetheless $A=\bigcup_{\varphi \in \Phi}A_\varphi$. Set $\Psi:=\dset{\varphi\in \Phi}{A_\varphi\neq\emptyset}$ and for each $\varphi \in \Psi$ fix a function $y_\varphi \in A_\varphi$. Since $\dset{y_\varphi}{\varphi \in \Psi}$ is a finite subset of $C(X,\R)$, it is also equicontinuous. So we can find a $\delta>0$ such that $\sup_{\varphi \in \Psi}\abs{y_\varphi(t)-y_\varphi(s)} \leq \frac{1}{5}\varepsilon$ whenever $t,s \in X$ satisfy the condition $d(t,s)\leq \delta$. Now, take an arbitrary function $x \in A$ and any points $t,s \in X$ such that $d(t,s)\leq \delta$. Since $A=\bigcup_{\varphi \in \Psi}A_\varphi$, there is $\varphi \in \Psi$ such that $\norm{x-y_\varphi}_k\leq \frac{1}{5}\varepsilon$, and hence
\begin{align*}
 \abs{x(t)-x(s)} & \leq \abs{x(s)-y_\varphi(s)}+\abs{y_\varphi(s)-y_\varphi(t)}+\abs{y_\varphi(t)-x(t)}\\
 & \leq 2\norm{x-y_\varphi}_\infty + \abs{y_\varphi(s)-y_\varphi(t)}\\
 & \leq 2\norm{x-y_\varphi}_k + \tfrac{3}{5}\varepsilon\leq \varepsilon.
\end{align*}
This proves that the set $A$ is equicontinuous.
\end{proof}

All the above discussion leads to the well-known Arzel\`a--Ascoli compactness criterion.

\begin{theorem}
Let $X$ be a compact metric space. A non-empty and bounded subset of $C(X,\R)$ is relatively compact if and only if it is equicontinuous.
\end{theorem}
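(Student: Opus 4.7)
The plan is to treat the Arzel\`a--Ascoli criterion as the direct payoff of the machinery assembled above: essentially everything has been done, and what remains is a careful two-way assembly via Theorem~\ref{thm:compactness_ver2}.

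First, I would dispose of the trivial finite case. If $\card X<+\infty$, then $X$ is a finite discrete space, so $C(X,\R)$ is finite-dimensional and every subset is automatically equicontinuous; bounded subsets of a finite-dimensional normed space are relatively compact, so both implications are immediate. From now on I assume $\card X=+\infty$, so that a countable dense sequence $T=(t_j)_{j\in\mathbb N}$ exists and the family $\bigl\{\norm{\cdot}_i\bigr\}_{i\in\mathbb N}$ defined by~\eqref{eq:semi_norms_on_C} is available.

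Next, I would record the three structural facts the paper has already verified: this family satisfies~\ref{i},~\ref{ii} and~\ref{iv} (the latter by a diagonal extraction on the countable dense set $T$, together with the boundedness of the sequence in $\norm{\cdot}_\infty$), so Theorem~\ref{thm:compactness_ver2} is applicable in the space $C(X,\R)$. Since $C(X,\R)$ is a Banach space, precompactness and relative compactness coincide on its subsets; therefore Theorem~\ref{thm:compactness_ver2} can be read as: a non-empty bounded subset $A$ of $C(X,\R)$ is relatively compact if and only if $A-A$ is equinormed with respect to $\bigl\{\norm{\cdot}_i\bigr\}_{i\in\mathbb N}$.

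It then suffices to match ``$A-A$ equinormed'' with ``$A$ equicontinuous'' under the bounded\-ness hypothesis. For the forward direction, if $A$ is bounded and equicontinuous, Corollary~\ref{cor:A_EC_impies_A_A_equinormed} gives that $A-A$ is equinormed, hence by Theorem~\ref{thm:compactness_ver2} the set $A$ is precompact, hence relatively compact by completeness of $C(X,\R)$. For the converse, if $A$ is bounded and relatively compact, then (again by completeness) $A$ is precompact, so Theorem~\ref{thm:compactness_ver2} yields that $A-A$ is equinormed, and Proposition~\ref{prop:equinormed_impies_equicontinuous} then delivers equicontinuity of $A$.

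There is no real obstacle here: the proof is pure bookkeeping, with the only mild subtlety being the need to invoke completeness of $C(X,\R)$ to transfer between precompactness (which is what Theorem~\ref{thm:compactness_ver2} characterizes) and relative compactness (which is what Arzel\`a--Ascoli asserts). Everything else is a direct application of the equivalence ``$A-A$ equinormed $\Longleftrightarrow$ $A$ equicontinuous'' for bounded $A$, already proved in Corollary~\ref{cor:A_EC_impies_A_A_equinormed} and Proposition~\ref{prop:equinormed_impies_equicontinuous}.
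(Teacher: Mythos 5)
Your proposal is correct and follows essentially the same route as the paper: reduce the infinite case to Theorem~\ref{thm:compactness_ver2} via Corollary~\ref{cor:A_EC_impies_A_A_equinormed} and Proposition~\ref{prop:equinormed_impies_equicontinuous}, using completeness of $C(X,\R)$ to identify precompactness with relative compactness, and handle the finite case by finite-dimensionality. The bookkeeping is accurate and nothing is missing.
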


\begin{proof}
Let us consider two cases. If $\card X=+\infty$, then it suffices to apply Theorem~\ref{thm:compactness_ver2} together with Corollary~\ref{cor:A_EC_impies_A_A_equinormed} and Proposition~\ref{prop:equinormed_impies_equicontinuous} with the family of semi-norms defined by~\eqref{eq:semi_norms_on_C} and a fixed dense sequence (subset) $T=(t_j)_{j \in \mathbb N}$ of $X$.

On the other hand, if $\card X<+\infty$, then $C(X,\mathbb R)$ is linearly isometric with some $\mathbb R^n$ endowed with the maximum norm. So, the claim follows from the classical Bolzano--Weierstrass theorem, as in this case each non-empty subset of $C(X,\mathbb R)$ is equicontinuous.
\end{proof}

\begin{remark}
As we have already remarked although the notions of an equicontinuous and an equinormed set are closely related, they do not coincide. To see this more clearly let us consider some classical observations on sequences of functions (cf. Theorems 7.23--25 in \cite{Rud}). For simplicity, we consider the space $C[0,1]$ endowed with the family of semi-norms defined by~\eqref{eq:semi_norms_on_C} for a given dense sequence (subset) $T$ of $[0,1]$. Let $(x_n)_{n\in \mathbb N}$ be a sequence in $C[0,1]$.
\begin{enumerate}[label=(\alph*)]
\item\label{it:rem:a} If $(x_n)_{n\in \mathbb N}$ is uniformly convergent, then $\dset{x_n}{n\in \mathbb N}$ is equicontinuous. 

\item If $\dset{x_n}{n\in \mathbb N}$ is pointwise bounded and equicontinuous, then it is uniformly bounded (i.e. bounded as a subset of $C[0,1]$). 

\item If $(x_n)_{n\in \mathbb N}$ is pointwise convergent and the set $\dset{x_n}{n\in \mathbb N}$ is equicontinuous, then the sequence is uniformly convergent.

\item\label{it:rem:d} If $\dset{x_n}{n\in \mathbb N}$ is uniformly bounded and equicontinuous, then it contains a uniformly convergent subsequence.
\end{enumerate}
Let us replace the phrase ``$\dset{x_n}{n\in \mathbb N}$ is equicontinous'' with ``$\dset{x_n}{n\in \mathbb N}$ is equinormed'' in every item~\ref{it:rem:a}--\ref{it:rem:d}. After such a change the first statement is still true by Proposition~\ref{prop:equicontinuous_impies_equinormed}. The second one is true as well, since if the set $\dset{x_n}{n \in \mathbb N}$ is equinormed, there is an index $i\in \mathbb N$ for which the inequality $\norm{x_n}_{\infty} \leq 1+\norm{x_n}_i$ holds for all $n\in \mathbb N$. This, in turn, implies that for every $n \in \mathbb N$ we have $\norm{x_n}_{\infty} \leq 1+ \max_{1 \leq j \leq i} \abs{x_n(t_j)} \leq 1 + \max_{1 \leq j \leq i} \sup_{k \in \mathbb N} \abs{x_k(t_j)} < +\infty$. However, the last two modified assertions turn out to be false by Example~\ref{ex:equinormed} and Remark~\ref{rmk:equinormed}.
\end{remark}

\section{Compactness criterion in $\Phi BV[a,b]$}

So far we have shown how to obtain certain, nowadays classical, compactness criteria from our abstract results. In this section we are going to prove a compactness criterion in a space, where no such result is known. We will be dealing with the Banach space $\Phi BV[a,b]$ of functions of Schramm bounded variation. Although, at first glance this space may seem a bit artificial and of limited interest, it provides a unified approach to studying many classical spaces of functions of bounded variation and because of that is worth attention. Besides, what may come as a surprise is that each real-valued continuous function defined on a compact interval belongs to a certain space $\Phi BV[a,b]$. Our aim is  to examine relatively compact subsets of $\Phi BV[a,b]$. 

The definition of the Schramm variation (also known as the $\Phi$-variation) we are going to recall (see Definition~\ref{def:schramm_variation} below) is not the one Schramm introduced in his paper~\cite{schramm}. Sad to say, he did not pay sufficient attention to all the tiny details. For example, it is not clear whether Schramm considered finite or infinite collections of intervals, or maybe both. It is also shrouded in mystery if one is allowed to use degenerate intervals. A more precise approach to defining the Schramm variation was taken in the very nice monograph by Appell et al. (see~\cite{ABM}*{pp.~152--153}). Unfortunately, in that book it is also not clear whether degenerate intervals are allowed. So, before we will be able to proceed further, we have to stop for a while, take a look at dangers lurking in the shadows of the definition of the Schramm variation and try to dispel them. 

As already mentioned, there are two main factors we have to take into account: whether we consider finite or infinite families of intervals and whether we allow degenerate intervals (that is, those which consist of only one point) or not. This leads to four main cases and the relationship between them is described in the following proposition. (Since in this section, we will encounter some families consisting of intervals that are not necessarily non-overlapping, contrary to what some researchers suggest, we have decided not to omit the phrase ``closed and non-overlapping'' even if in some places it may look a little bit bulky. On the other hand, for simplicity's sake, we will assume that we work on the interval $[0,1]$. Finally, let us recall that we \emph{do} admit degenerate intervals, and that given a function $x$ and an interval $I:=[a,b]$ we write $x(I)$ for $x(b)-x(a)$.)

\begin{proposition}\label{prop:variation_various_definitions}
Let $(\varphi)_{n \in \mathbb N}$ be a fixed sequence of Young functions such that $\varphi_{k+1}(t)\leq \varphi_k(t)$ and $\sum_{n=1}^\infty \varphi_n(t)=+\infty$ for each $t >0$ and $k \in \mathbb N$. Given a function $x \colon [0,1] \to \mathbb R$ put
\begin{align*}
 A&:=\dset[\Bigg]{\sum_{n=1}^\infty \varphi_n\bigl(\abs{x(I_n)}\bigr)}{\parbox{22em}{$(I_n)_{n \in \mathbb N}$ is a sequence of closed, non-degenerate and non-overlapping subintervals of $[0,1]$}},\\[1mm]
 A^\ast&:=\dset[\Bigg]{\sum_{n=1}^\infty \varphi_n\bigl(2\abs{x(I_n)}\bigr)}{\parbox{22em}{$(I_n)_{n \in \mathbb N}$ is a sequence of closed, non-degenerate and non-overlapping subintervals of $[0,1]$}},\\[1mm]
 B&:=\dset[\Bigg]{\sum_{n=1}^\infty \varphi_n\bigl(\abs{x(I_n)}\bigr)}{\parbox{22em}{$(I_n)_{n \in \mathbb N}$ is a sequence of closed and non-overlapping subintervals}},\\[1mm]
 C&:=\dset[\Bigg]{\sum_{n=1}^k \varphi_n\bigl(\abs{x(I_n)}\bigr)}{\parbox{22em}{$I_1,\ldots,I_k$ is a finite collection of closed, non-degenerate and non-overlapping subintervals of $[0,1]$, where $k \in \mathbb N$}},\\[1mm]
D&:=\dset[\Bigg]{\sum_{n=1}^k \varphi_n\bigl(\abs{x(I_n)}\bigr)}{\parbox{22em}{$I_1,\ldots,I_k$ is a finite collection of closed and non-overlapping subintervals of $[0,1]$, where $k \in \mathbb N$}}.
\end{align*}
If $\alpha:=\sup A$, $\alpha^\ast:=\sup A^\ast$, $\beta:=\sup B$, $\gamma:=\sup C$ and $\delta:=\sup D$, then $\alpha \leq \beta =\gamma = \delta \leq \alpha^\ast$.
\end{proposition}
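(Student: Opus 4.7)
The plan is to split the chain $\alpha \leq \beta = \gamma = \delta \leq \alpha^\ast$ into five sub-parts and handle them separately. Two are formally immediate: every sequence admissible in the definition of $A$ is admissible in the definition of $B$ (non-degenerate being a special case of possibly-degenerate), so $\alpha \leq \beta$; and similarly $C \subseteq D$ yields $\gamma \leq \delta$. The inequality $\gamma \leq \beta$ is also elementary---given a finite non-degenerate non-overlapping collection $I_1,\ldots,I_k$, I would extend it to an infinite admissible sequence for $B$ by declaring $I_n := \{0\}$ for $n > k$. By the convention recalled in Section~2, each pairwise intersection among these singletons (and between a singleton and any original $I_j$) contains at most one point, so non-overlapping is preserved, and each added term contributes $\varphi_n(0) = 0$ to the sum.

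The reverse bounds $\delta \leq \gamma$ and $\beta \leq \gamma$ are where the hypothesis $\varphi_{k+1}\leq\varphi_k$ does the work. Given any finite possibly-degenerate non-overlapping collection $I_1,\ldots,I_k$, let $n_1<\cdots<n_m$ enumerate the non-degenerate indices and set $J_j := I_{n_j}$; since the degenerate intervals contribute $0$ and $n_j \geq j$, one has
\[
 \sum_{n=1}^{k} \varphi_n\bigl(\abs{x(I_n)}\bigr) = \sum_{j=1}^{m}\varphi_{n_j}\bigl(\abs{x(J_j)}\bigr) \leq \sum_{j=1}^{m}\varphi_j\bigl(\abs{x(J_j)}\bigr)\leq \gamma,
\]
which gives $\delta \leq \gamma$. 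Applying exactly the same estimate to each finite truncation $I_1,\ldots,I_N$ of an infinite admissible sequence from $B$ and letting $N\to\infty$ yields $\beta\leq\gamma$.

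The final inequality $\gamma\leq\alpha^\ast$ is the only genuinely geometric step, and it is where the factor~$2$ enters. For a non-degenerate $I_j=[a_j,b_j]$ with midpoint $c_j$, the identity $x(I_j)=x([a_j,c_j])+x([c_j,b_j])$ shows that the half $M_j \in \{[a_j,c_j],[c_j,b_j]\}$ chosen so as to maximize $\abs{x(\cdot)}$ satisfies $\abs{x(I_j)}\leq 2\abs{x(M_j)}$, whence $\varphi_j\bigl(\abs{x(I_j)}\bigr) \leq \varphi_j\bigl(2\abs{x(M_j)}\bigr)$. Setting $J_j:=M_j$ for $j\leq k$ produces non-degenerate non-overlapping intervals occupying only half of each $I_j$, and the complementary half of, say, $I_1$ is itself non-degenerate and can be partitioned into countably many non-degenerate non-overlapping sub-intervals that serve as $J_{k+1},J_{k+2},\ldots$. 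The resulting sequence $(J_n)_{n\in\N}$ is admissible for $\alpha^\ast$, and
\[
\sum_{n=1}^{\infty} \varphi_n\bigl(2\abs{x(J_n)}\bigr)\geq \sum_{j=1}^{k}\varphi_j\bigl(2\abs{x(M_j)}\bigr)\geq \sum_{j=1}^{k}\varphi_j\bigl(\abs{x(I_j)}\bigr),
\]
so $\alpha^\ast\geq\gamma$ after taking the supremum on the right-hand side.

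The main obstacle is precisely this last step, and the scenario to worry about is when the collection exhausts $[0,1]$: in that case no extra intervals can be added outside $\bigcup_j I_j$. The halving trick bypasses this uniformly by always freeing up a non-degenerate reservoir inside each $I_j$; the price is that one only controls half of $\abs{x(I_j)}$, which is exactly why the factor $2$ in the definition of $\alpha^\ast$ is built into the statement.
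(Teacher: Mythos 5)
Your proof is correct. The skeleton (reduce everything to $\gamma$ via padding/deletion, then prove $\gamma\le\alpha^\ast$ by making room for an infinite non-degenerate tail) is the same as the paper's, but both substantive steps are executed differently. For $\delta,\beta\le\gamma$ the paper runs an iterative ``elimination of zeros'', shifting indices one vanishing term at a time; you instead delete all degenerate intervals at once and re-index, using $n_j\ge j$ and the pointwise monotonicity $\varphi_{n_j}\le\varphi_j$ --- the same idea in a single cleaner stroke (the paper's variant also removes non-degenerate intervals with $x(I_j)=0$, which is immaterial). The genuine divergence is in $\gamma\le\alpha^\ast$: the paper splits into the cases $\bigcup_n I_n\ne[0,1]$ and $\bigcup_n I_n=[0,1]$, bisects only the last interval in the second case, and in both cases relies on convexity of the Young functions through $\varphi(t)\le\tfrac12\varphi(2t)$ and $\varphi(s+t)\le\tfrac12\varphi(2s)+\tfrac12\varphi(2t)$. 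You bisect every $I_j$, keep the half $M_j$ with the larger increment, and use only the telescoping identity $x(I_j)=x([a_j,c_j])+x([c_j,b_j])$ plus the monotonicity of each individual $\varphi_j$ to get $\varphi_j(\abs{x(I_j)})\le\varphi_j(2\abs{x(M_j)})$, with the discarded half of $I_1$ supplying the infinite tail uniformly in all cases. Your route is more elementary (convexity is never invoked) and avoids the case distinction; the paper's version perturbs at most one of the given intervals and parallels the Waterman-variation argument it cites. Both correctly locate the factor $2$ in $\alpha^\ast$ as the price of freeing up space inside $[0,1]$ when the given intervals may exhaust it.
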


\begin{proof}
The inequalities $\alpha \leq \beta$ and $\gamma \leq \delta \leq \beta$ are obvious. (Note that the last one is true because in the sequences appearing in $B$ we allow degenerate intervals.)

Now, we are going to show that $\beta \leq \gamma$. Let $(I_n)_{n \in \mathbb N}$ be an arbitrary sequence of closed and non-overlapping subintervals of $[0,1]$. Fix a positive integer $k$. If $\sum_{n=1}^k \varphi_n(\abs{x(I_n)})=0$, then clearly $\sum_{n=1}^k \varphi_n(\abs{x(I_n)})\leq \gamma$. Thus, we may assume that $\sum_{n=1}^k \varphi_n(\abs{x(I_n)})>0$. Similarly, if $\varphi_j(\abs{x(I_j)})>0$ for every $j=1,\ldots,k$, then each interval $I_j$ is non-degenerate, and consequently $\sum_{n=1}^k \varphi_n(\abs{x(I_n)})\leq \gamma$. So, it suffices to consider the situation, when $\varphi_j(\abs{x(I_j)})=0$ for some $j \in \{1,\ldots,k\}$. Then, using the pointwise monotonicity of the sequence $(\varphi_n)_{n \in \mathbb N}$ of Young functions, we obtain
\[
 \sum_{n=1}^k \varphi_n\bigl(\abs{x(I_n)}\bigr) \leq \sum_{n=1}^{j-1} \varphi_n\bigl(\abs{x(I_n)}\bigr) + \sum_{n=j}^{k} \varphi_n\bigl(\abs{x(I_{n+1})}\bigr);
\]
(we use the convention that when the upper summation index is smaller than the lower one, then the sum in question is defined to be zero). Repeating this ``elimination of zeros'' process (at most $k-1$ times), we finally arrive at the estimate
\[
 \sum_{n=1}^k \varphi_n\bigl(\abs{x(I_n)}\bigr) \leq \sum_{n=1}^l \varphi_n\bigl(\abs{x(J_n)}\bigr),
\] 
where $J_1,\ldots,J_l$ is a finite collection of closed, non-degenerate and non-overlapping subintervals of $[0,1]$. This proves that $\sum_{n=1}^k \varphi_n(\abs{x(I_n)})\leq \gamma$. As the number $k \in \mathbb N$ was arbitrary we conclude that $\beta \leq \gamma$.

Now, it remains to show that $\gamma \leq \alpha^\ast$. Our proof will mimic the approach presented in~\cite{ABM}*{pp.~129--130}, which was used to establish a similar (but slightly different) result for the Waterman variation. Clearly, we may assume that $\alpha^\ast<+\infty$. Furthermore, fix any finite collection $I_1,\ldots,I_k$ of closed, non-degenerate and non-overlapping subintervals of $[0,1]$. If $\bigcup_{n=1}^{k} I_n \neq [0,1]$, then taking any sequence $(J_n)_{n \in \mathbb N}$ of closed, non-degenerate and non-overlapping subintervals of  $[0,1]\setminus \bigcup_{n=1}^{k} I_n$ and using the convexity of the Young functions $\varphi_n$ and the fact that $\varphi_n(0)=0$, we obtain
\begin{align*}
  \sum_{n=1}^k \varphi_n\bigl(\abs{x(I_n)}\bigr) \leq  \frac{1}{2}\sum_{n=1}^k \varphi_n\bigl(2\abs{x(I_n)}\bigr) +  \frac{1}{2}\sum_{n=k+1}^\infty \varphi_n\bigl(2\abs{x(J_{n-k})}\bigr) \leq \frac{1}{2}\alpha^\ast.
\end{align*}

Now, suppose that $\bigcup_{n=1}^{k} I_n=[0,1]$ and choose any two sequences $(J_n^1)_{n \in \mathbb N}$ and $(J_n^2)_{n \in \mathbb N}$ of closed, non-degenerate and non-overlapping subintervals of $I_k:=[a_k,b_k]$ with $J_1^1:=[a_k, \frac{1}{2}(a_k+b_k)]$ and $J_1^2:=[\frac{1}{2}(a_k+b_k), b_k]$. (Let us underline that we do not assume that the intervals belonging to the sequence $(J_n^1)_{n \in \mathbb N}$ are non-overlapping with those belonging to $(J_n^2)_{n \in \mathbb N}$.) Then, using properties of the functions $\varphi_n$, we get
\begin{align*}
 \sum_{n=1}^k \varphi_n\bigl(\abs{x(I_n)}\bigr)	&\leq  \sum_{n=1}^{k-1} \varphi_n\bigl(\abs{x(I_n)}\bigr) + \varphi_k\bigl(\abs{x(J_1^1)} + \abs{x(J_1^2)}\bigr)\\
	& \leq  \sum_{n=1}^{k-1} \varphi_n\bigl(2\abs{x(I_n)}\bigr) + \frac{1}{2}\varphi_k\bigl(2\abs{x(J_1^1)}\bigr) +  \frac{1}{2}\varphi_k\bigl(2\abs{x(J_1^2)}\bigr)\\
	& \leq \frac{1}{2}\Biggl(\sum_{n=1}^{k-1} \varphi_n\bigl(2\abs{x(I_n)}\bigr) + \sum_{n=1}^{\infty} \varphi_{n+k-1}\bigl(2\abs{x(J_n^1)}\bigr) \Biggr)\\
	&\qquad + \frac{1}{2}\Biggl(\sum_{n=1}^{k-1} \varphi_n\bigl(2\abs{x(I_n)}\bigr) + \sum_{n=1}^{\infty} \varphi_{n+k-1}\bigl(2\abs{x(J_n^2)}\bigr) \Biggr)\\
	& \leq \frac{1}{2}\alpha^\ast + \frac{1}{2}\alpha^\ast=\alpha^\ast.
\end{align*}

We have thus shown that regardless of whether the union  $\bigcup_{n=1}^{k} I_n$ coincides with the interval $[0,1]$ or not, we have $\sum_{n=1}^k \varphi_n\bigl(\abs{x(I_n)}\bigr) \leq \alpha^\ast$. This shows that $\gamma \leq \alpha^\ast$ and completes the proof.
\end{proof}

The following example illustrates that in general both inequalities in Proposition~\ref{prop:variation_various_definitions} may be strict.

\begin{example}\label{ex:schramm_variation_1}
For $n \in \mathbb N$ consider the Young functions $\varphi_n \colon [0,+\infty) \to [0,+\infty)$ given by the formulas $\varphi_1(t)=10t$ and $\varphi_2(t)=\varphi_3(t)=\ldots=t$. Moreover, let $x\colon [0,1] \to \mathbb R$ be defined as
\[
 x(t)=\begin{cases}
        0, &\text{if $t=0$},\\
				t+1, &\text{if $t \in (0,1)$},\\
				3, & \text{if $t=1$}.
				\end{cases}
\]

Let $I_1,\ldots,I_k$ be an arbitrary finite collection of closed, non-degenerate and non-overlapping subintervals of $[0,1]$. If $k=1$, then, obviously, $\sum_{n=1}^k \varphi_n(\abs{x(I_n)})\leq 30$. If, on the other hand, $k\geq 2$, we have $\sum_{n=1}^k \varphi_n(\abs{x(I_n)})\leq 23$, as the points $0$ and $1$ cannot belong to the same interval $I_n$. (Note that as a by-product of this estimate we also get that $\alpha \leq 23$.) Finally, noting that for the interval $[0,1]$ we have $\varphi_1(\abs{x(1)-x(0)})=30$, we deduce that $\beta=\gamma=\delta=30$.

Now, let us estimate $\alpha^\ast$. Let $c_0:=0$ and $c_k:=\sum_{n=1}^{k} 2^{-k}$ for $k \in \mathbb N$. Then,
\[
 \sum_{n=1}^\infty \varphi_n\bigl(2\abs{x(c_n)-x(c_{n-1})}\bigr)\geq 20\abs{x(\tfrac{1}{2})-x(0)} + 2\abs{x(\tfrac{3}{4})-x(\tfrac{1}{2})} = \tfrac{61}{2}.
\]
Thus, $\alpha \leq 23 < 30 = \beta=\gamma=\delta < \tfrac{61}{2} \leq \alpha^\ast$.
\end{example} 

\subsection{Functions of bounded Schramm variation}  
After this lengthy technical introduction, we are finally ready to provide the definition of the Schramm variation. We have chosen to take the  approach ``D'' as it seems the simplest and most flexible one.

\begin{definition}\label{def:schramm_variation}
Let $x$ be a real-valued function defined on $[0,1]$ and let $(\varphi_n)_{n \in \mathbb N}$ be a sequence of Young functions such that $\varphi_{k+1}(t)\leq \varphi_k(t)$ and $\sum_{n=1}^\infty \varphi_n(t)=+\infty$ for each $t >0$ and $k \in \mathbb N$. The (possibly infinite) number
\[
 \var_{\Phi} x :=\sup \sum_{n=1}^{k}\varphi_n\big(\abs{x(I_n)}\big),
\]
where the supremum is taken over all finite collections $I_1,\ldots,I_k$ of closed and non-overlapping subintervals of $[0,1]$, $k \in \mathbb N$, is called the \emph{Schramm variation} (or, the \emph{$\Phi$-variation}) of the function $x$ over $[0,1]$.

A function $x \colon [0,1] \to \mathbb R$ is said to be of \emph{bounded Schramm variation} if there exists a number $\lambda>0$ such that $\var_\Phi(\lambda x)<+\infty$.
\end{definition}

\begin{remark}
Observe that due to Proposition~\ref{prop:variation_various_definitions} the class of all real-valued functions defined on the interval $[0,1]$ with bounded Schramm variation, that is, the set
\[
\Phi BV[0,1]:=\dset{x \colon [0,1] \to \mathbb R}{\text{$\var_\Phi (\lambda x)<+\infty$ for some $\lambda>0$}},
\]
is independent of the choice of the approach. In other words, in the definition of a function of bounded Schramm variation we can consider variations taken with respect to finite or/and infinite sequences of intervals which are either degenerate or non-degenerate, and eventually we will end up with the same class of functions. It can be shown that $\Phi BV[0,1]$ is a linear space. 
\end{remark}

\begin{remark}\label{rem:variations}
The Schramm variation generalizes many known and classical variations. Indeed, if for $n \in \mathbb N$ we define the Young functions $\varphi_n \colon [0,+\infty)\to [0,+\infty)$ as, respectively, $\varphi_n(t)=t$, $\varphi_n(t)=t^p$ with $p \in [1,+\infty)$, $\varphi_n(t)=\varphi(t)$, where $\varphi$ is a fixed $\varphi$-function, and $\varphi_n(t)=\lambda_nt$, where the non-increasing sequence of positive numbers $(\lambda_n)_{n \in \mathbb N}$ is such that $\sum_{n=1}^\infty \lambda_n=+\infty$, then from Definition~\ref{def:schramm_variation} we recover the definitions of variation in the sense of Jordan, Wiener, Young and finally Waterman. (For more information on classical and non-classical variations we refer the reader to the monograph~\cite{ABM}.)  
\end{remark}

Our main goal in this part of the paper is to provide a compactness criterion in the space $\Phi BV[0,1]$. To this end we are going to apply our abstract results from Section~\ref{sec:abstract_compactness_criterion}. So, we need to define a suitable family of semi-norms on $\Phi BV[0,1]$, and this is exactly what we are going to do now.

\begin{notation}
Fix a sequence $(\varphi_n)_{n \in \mathbb N}$ of Young functions such that $\varphi_{k+1}(t)\leq \varphi_k(t)$ and $\sum_{n=1}^\infty \varphi_n(t)=+\infty$ for each $t >0$ and $k \in \mathbb N$. Let $\mathcal P$ be the family of all closed subintervals of $[0,1]$ and let $\mathcal J \subseteq \mathcal P$; we set
\[
 V_{\mathcal J}(x):=\sup \sum_{n=1}^{k}\varphi_n\big(\abs{x(I_n)}\big),
\]
where the supremum is taken over all finite collections $I_1,\ldots,I_k$ of non-overlapping intervals such that $I_n \in \mathcal J$ for $n=1,\ldots,k$, $k \in \mathbb N$. Note that in the special case when $\mathcal J=\mathcal P$, we have $V_{\mathcal P}(x)=\var_{\Phi}(x)$. Finally, we define
\begin{equation}\label{eq:semi-norm}
 \abs{x}_{\mathcal J}:=\inf\dset[\big]{\lambda>0}{V_{\mathcal J}(\tfrac{x}{\lambda})\leq 1}
\end{equation} 
and
\begin{equation}\label{eq:norm}
 \norm{x}_{\mathcal J}:=\abs{x(0)}+ \abs{x}_{\mathcal J}.
\end{equation} 
In accordance with many text concerning functions of bounded variation, in the sequel we will write $\abs{x}_{\Phi}$ and $\norm{x}_{\Phi}$ instead of $\abs{x}_{\mathcal P}$ and $\norm{x}_{\mathcal P}$, respectively.
\end{notation}   

It turns out that for each family $\mathcal J$ the formula~\eqref{eq:norm} defines a semi-norm on $\Phi BV[0,1]$. The proof of this fact runs along the same lines as the proof of~\cite{schramm}*{Theorem~2.2} or~\cite{ABM}*{Proposition~2.44} with the slight difference  that we do not assume that the considered functions of bounded Schramm variation vanish at $t=0$. Hence, we have decided to omit it. 

\begin{proposition}
Let $(\varphi_n)_{n \in \mathbb N}$ be a fixed sequence of Young functions such that $\varphi_{k+1}(t)\leq \varphi_k(t)$ and $\sum_{n=1}^\infty \varphi_n(t)=+\infty$ for each $t >0$ and $k \in \mathbb N$. Moreover, let $\mathcal J$ be a given family of closed subintervals of $[0,1]$. Then, the formula~\eqref{eq:norm} defines a semi-norm on the space $\Phi BV[0,1]$. Furthermore, $\norm{\cdot}_{\Phi}$ is a norm. 
\end{proposition}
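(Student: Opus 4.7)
The plan is to treat $\abs{\cdot}_{\mathcal J}$ as a Luxemburg-type (Minkowski) functional built from the modular $V_{\mathcal J}$, and then add the semi-norm $x\mapsto\abs{x(0)}$ to obtain $\norm{\cdot}_{\mathcal J}$. The first step is to check that $\abs{x}_{\mathcal J}<+\infty$ for every $x\in\Phi BV[0,1]$: if $\lambda_0>0$ is chosen so that $\var_\Phi(\lambda_0 x)<+\infty$, then convexity of each $\varphi_n$ together with $\varphi_n(0)=0$ gives the sub-homogeneity bound $\varphi_n(\alpha t)\le\alpha\varphi_n(t)$ for $\alpha\in[0,1]$, which for $\mu\ge 1$ yields $V_{\mathcal J}(\lambda_0 x/\mu)\le\mu^{-1}V_{\mathcal J}(\lambda_0 x)\le \mu^{-1}V_{\mathcal P}(\lambda_0 x)$; choosing $\mu$ large enough makes the right-hand side at most $1$, and the corresponding $\lambda=\mu/\lambda_0$ witnesses finiteness of the infimum in~\eqref{eq:semi-norm}. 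I also record the crucial monotonicity: since each $\varphi_n$ is non-decreasing, $\lambda\mapsto V_{\mathcal J}(x/\lambda)$ is non-increasing on $(0,+\infty)$, so the admissible set in~\eqref{eq:semi-norm} is an upper interval and, in particular, $V_{\mathcal J}(x/\mu)\le 1$ whenever $\mu>\abs{x}_{\mathcal J}$.

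Homogeneity is then immediate: because $V_{\mathcal J}$ depends on $x$ only through the numbers $\abs{x(I_n)}$, one has $V_{\mathcal J}(\alpha x/\lambda)=V_{\mathcal J}(x/(\lambda/\abs{\alpha}))$ for $\alpha\ne 0$, and the substitution $\mu=\lambda/\abs{\alpha}$ in~\eqref{eq:semi-norm} gives $\abs{\alpha x}_{\mathcal J}=\abs{\alpha}\,\abs{x}_{\mathcal J}$; the case $\alpha=0$ follows from $V_{\mathcal J}(0)=0$.

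The heart of the argument is the triangle inequality. Fix $\mu>\abs{x}_{\mathcal J}$ and $\nu>\abs{y}_{\mathcal J}$, so that by the monotonicity above $V_{\mathcal J}(x/\mu)\le 1$ and $V_{\mathcal J}(y/\nu)\le 1$. For any admissible finite collection $I_1,\dots,I_k$ of non-overlapping intervals from $\mathcal J$, the elementary bound $\abs{(x+y)(I_n)}\le\abs{x(I_n)}+\abs{y(I_n)}$, combined with convexity of $\varphi_n$ applied with the weights $\mu/(\mu+\nu)$ and $\nu/(\mu+\nu)$, yields
\[
\varphi_n\bigl(\tfrac{\abs{(x+y)(I_n)}}{\mu+\nu}\bigr)\le \tfrac{\mu}{\mu+\nu}\,\varphi_n\bigl(\tfrac{\abs{x(I_n)}}{\mu}\bigr)+\tfrac{\nu}{\mu+\nu}\,\varphi_n\bigl(\tfrac{\abs{y(I_n)}}{\nu}\bigr).
\]
Summing in $n$, taking the supremum over admissible collections, and using the two modular estimates gives $V_{\mathcal J}((x+y)/(\mu+\nu))\le 1$, hence $\abs{x+y}_{\mathcal J}\le\mu+\nu$; letting $\mu\downarrow\abs{x}_{\mathcal J}$ and $\nu\downarrow\abs{y}_{\mathcal J}$ proves the triangle inequality. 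Since $x\mapsto\abs{x(0)}$ is manifestly a semi-norm, adding it gives that $\norm{\cdot}_{\mathcal J}$ is a semi-norm on $\Phi BV[0,1]$.

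Finally, for $\mathcal J=\mathcal P$ I upgrade to a norm: assume $\norm{x}_\Phi=0$, so that $x(0)=0$ and $V_{\mathcal P}(x/\lambda)\le 1$ for every $\lambda>0$. For any $t\in(0,1]$ the singleton collection $\{[0,t]\}\subseteq\mathcal P$ produces the bound $\varphi_1(\abs{x(t)}/\lambda)\le V_{\mathcal P}(x/\lambda)\le 1$; since $\varphi_1$ is strictly increasing with $\varphi_1(s)\to+\infty$ as $s\to+\infty$, sending $\lambda\downarrow 0$ forces $x(t)=0$, and together with $x(0)=0$ this gives $x\equiv 0$. The main technical subtlety I anticipate is that $V_{\mathcal J}(x/\lambda)\le 1$ need not hold at $\lambda=\abs{x}_{\mathcal J}$ itself; the monotonicity of $\lambda\mapsto V_{\mathcal J}(x/\lambda)$ bypasses this cleanly by guaranteeing the estimate at every $\mu>\abs{x}_{\mathcal J}$, which is all that the Luxemburg convexity trick requires.
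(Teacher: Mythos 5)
Your proof is correct and follows exactly the standard Luxemburg-functional argument that the paper itself omits, deferring instead to \cite{schramm}*{Theorem~2.2} and \cite{ABM}*{Proposition~2.44}: sub-homogeneity of the modular $V_{\mathcal J}$ for finiteness, the convex-combination trick at levels $\mu>\abs{x}_{\mathcal J}$, $\nu>\abs{y}_{\mathcal J}$ for the triangle inequality, and positive homogeneity by substitution. You also correctly handle the one point the paper flags as differing from those references --- functions need not vanish at $t=0$ --- by adding the semi-norm $x\mapsto\abs{x(0)}$ and using the interval $[0,t]$ together with $x(0)=0$ in the definiteness argument for $\norm{\cdot}_\Phi$.
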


\begin{remark}\label{rem:norm_on_PBV}
It is not difficult to show that each function $x \in \Phi BV[0,1]$ is bounded and that $\norm{x}_{\infty}\leq c_\Phi\norm{x}_{\Phi}$ with $c_\Phi:=\max\{1,\varphi_1^{-1}(1)\}$. Indeed, if $t \in [0,1]$ and $\lambda>0$ is such that $\var_{\Phi}(\lambda^{-1}x)\leq 1$, then
\begin{align*}
 \lambda^{-1}\abs{x(t)}&\leq \lambda^{-1}\abs{x(0)} + \varphi_1^{-1}\bigl(\varphi_1\bigl(\lambda^{-1}\abs{x(t)-x(0)}\bigr)\bigr)\\
&\leq \lambda^{-1}\abs{x(0)}+\varphi_1^{-1}(\var_\Phi (\lambda^{-1}x))\\
&\leq \lambda^{-1}\abs{x(0)} + \varphi_1^{-1}(1).
\end{align*}
Hence, $\lambda^{-1}\norm{x}_{\infty} \leq \lambda^{-1}\abs{x(0)} + \varphi_1^{-1}(1)$, and consequently
\[
 \frac{\norm{x}_{\infty} - \abs{x(0)}}{\varphi_1^{-1}(1)} \leq \lambda.
\]
This, in turn, implies that $\norm{x}_{\infty} \leq \abs{x(0)} + \varphi_1^{-1}(1)\abs{x}_\Phi \leq c_\Phi\norm{x}_\Phi$.
\end{remark}

We end this subsection with a technical result which will come in handy when proving that the family of semi-norms $\{\norm{\cdot}_{\mathcal J}\}_{\mathcal J \subseteq \mathcal P}$ on $\Phi BV[0,1]$ satisfies conditions~\ref{i}--\ref{iii}.

\begin{proposition}\label{prop:equivalent_norm_F}
Let $(\varphi_n)_{n \in \mathbb N}$ be a fixed sequence of Young functions such that $\varphi_{k+1}(t)\leq \varphi_k(t)$ and $\sum_{n=1}^\infty \varphi_n(t)=+\infty$ for each $t >0$ and $k \in \mathbb N$. Moreover, let $\mathcal J$ be a given family of closed subintervals of $[0,1]$. Then, for every $x \in \Phi BV[0,1]$, we have
\begin{equation}\label{eq:equivalent_norm_F_formula}
 \abs{x}_{\mathcal J}=\sup\biggl(\inf\dset[\bigg]{\lambda>0}{ \sum_{n=1}^{k}\varphi_n\big(\abs{\lambda^{-1}x(I_n)}\big)\leq 1}\biggr),
\end{equation}
where the supremum is taken over all finite collections $I_1,\ldots,I_k$ of non-overlapping intervals such that $I_n \in \mathcal J$ for $n=1,\ldots,k$, $k \in \mathbb N$.
\end{proposition}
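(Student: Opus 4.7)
The plan is to reduce the proposition to a routine manipulation of Minkowski functionals, using the sup--inf structure inherent in the definitions. For a fixed finite collection $\Pi=(I_1,\ldots,I_k)$ of non-overlapping intervals drawn from $\mathcal J$ (with $k\in\mathbb N$ varying), I would introduce the abbreviations $W_\Pi(y):=\sum_{n=1}^{k}\varphi_n(\abs{y(I_n)})$ and $\mu_\Pi(x):=\inf\set[\big]{\lambda>0: W_\Pi(\lambda^{-1}x)\leq 1}$, so that $V_{\mathcal J}(y)=\sup_\Pi W_\Pi(y)$ and the right-hand side of~\eqref{eq:equivalent_norm_F_formula} equals $\sup_\Pi \mu_\Pi(x)$. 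The goal thus becomes the identity $\abs{x}_{\mathcal J}=\sup_\Pi \mu_\Pi(x)$.

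The key analytic fact underlying the whole argument is that for any $\Pi$ the map $\lambda\mapsto W_\Pi(\lambda^{-1}x)$ is non-increasing and continuous on $(0,+\infty)$ and tends to $0$ as $\lambda\to+\infty$; this is immediate from the continuity, monotonicity and vanishing at $0$ of each Young function $\varphi_n$. In particular $\mu_\Pi(x)<+\infty$ and $W_\Pi(\lambda^{-1}x)\leq 1$ holds for every $\lambda>\mu_\Pi(x)$.

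The inequality ``$\geq$'' is then straightforward: if $V_{\mathcal J}(\lambda^{-1}x)\leq 1$, then $W_\Pi(\lambda^{-1}x)\leq V_{\mathcal J}(\lambda^{-1}x)\leq 1$ for every $\Pi$, so $\lambda\geq \mu_\Pi(x)$; taking the infimum over such $\lambda$ yields $\mu_\Pi(x)\leq \abs{x}_{\mathcal J}$, and then the supremum over $\Pi$ gives $\sup_\Pi \mu_\Pi(x)\leq \abs{x}_{\mathcal J}$. For the opposite inequality, pick any $\lambda>\sup_\Pi \mu_\Pi(x)$. Then for each $\Pi$ one has $\lambda>\mu_\Pi(x)$, so by the definition of $\mu_\Pi(x)$ as an infimum there is some $\lambda'\in(\mu_\Pi(x),\lambda]$ with $W_\Pi((\lambda')^{-1}x)\leq 1$, and the monotonicity recorded above forces $W_\Pi(\lambda^{-1}x)\leq W_\Pi((\lambda')^{-1}x)\leq 1$. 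Taking the supremum over $\Pi$ delivers $V_{\mathcal J}(\lambda^{-1}x)\leq 1$, whence $\abs{x}_{\mathcal J}\leq \lambda$; letting $\lambda\downarrow \sup_\Pi \mu_\Pi(x)$ completes the argument.

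I do not expect any serious obstacle: the only mildly delicate point is the use of monotonicity of $W_\Pi(\lambda^{-1}x)$ in $\lambda$ to transfer the inequality $W_\Pi((\lambda')^{-1}x)\leq 1$ to $W_\Pi(\lambda^{-1}x)\leq 1$ uniformly in $\Pi$; without this step one might worry that infima over different $\Pi$ need not be attained simultaneously. The boundary cases $x\equiv 0$ or $\mu_\Pi(x)=0$ cause no difficulty since then both sides of~\eqref{eq:equivalent_norm_F_formula} vanish.
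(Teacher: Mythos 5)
Your proof is correct and takes essentially the same route as the paper's: the inequality $\sup_\Pi\mu_\Pi(x)\le\abs{x}_{\mathcal J}$ follows by comparing the sets over which the infima are taken, and the reverse inequality by selecting, for each collection $\Pi$, a witness $\lambda'$ below a common level $\lambda$ and using monotonicity of the Young functions to conclude $V_{\mathcal J}(\lambda^{-1}x)\le 1$. The only (cosmetic) difference is that the paper writes the common level as $\varepsilon+\scal{x}_{\mathcal J}$ and lets $\varepsilon\downarrow 0$, whereas you take a generic $\lambda>\sup_\Pi\mu_\Pi(x)$ and let $\lambda$ decrease to the supremum.
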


\begin{proof}
Let $x \in \Phi BV[0,1]$ be fixed and by $\scal{x}_{\mathcal J}$ let us denote the right-hand side of the formula~\eqref{eq:equivalent_norm_F_formula}. Moreover, let $I_1,\ldots,I_k$ be a finite collection of non-overlapping intervals such that $I_n \in \mathcal J$ for $n=1,\ldots,k$. Then, by definition of $V_{\mathcal J}$, for every $\lambda >0$ we have
\[
 \sum_{n=1}^{k}\varphi_n\big(\abs{\lambda^{-1}x(I_n)}\big) \leq V_{\mathcal J}(\lambda^{-1}x).
\]
This implies that  
\[
\inf\dset[\bigg]{\lambda>0}{ \sum_{n=1}^{k}\varphi_n\big(\abs{\lambda^{-1}x(I_n)}\big)\leq 1} \leq \inf\dset[\big]{\lambda>0}{V_{\mathcal J}(\lambda^{-1}x)\leq 1} =\abs{x}_{\mathcal J}.
\]
As the collection $I_1,\ldots,I_k$ was arbitrary, we obtain $\scal{x}_{\mathcal J}\leq \abs{x}_{\mathcal J}$.

To prove the opposite inequality, let us note that for every finite collection  $I_1,\ldots,I_k$ of non-overlapping intervals such that $I_n \in \mathcal J$ for $n=1,\ldots,k$ and every $\varepsilon>0$ there exists a number $\lambda$ such that $0<\lambda<\varepsilon+\scal{x}_{\mathcal J}$ and $\sum_{n=1}^{k}\varphi_n\big(\abs{\lambda^{-1}x(I_n)}\big)\leq 1$. As 
\[
\sum_{n=1}^{k}\varphi_n\biggl(\frac{\abs{x(I_n)}}{\varepsilon+\scal{x}_{\mathcal J}}\biggr) \leq \sum_{n=1}^{k}\varphi_n\biggl(\frac{\abs{x(I_n)}}{\lambda}\biggr) \leq 1,
\]
we conclude that
\[
V_{\mathcal J}\biggl(\frac{x}{\varepsilon+\scal{x}_{\mathcal J}}\biggr)\leq 1
\]
for every $\varepsilon>0$. And so $\abs{x}_{\mathcal J}\leq \varepsilon+\scal{x}_{\mathcal J}$. To end the proof it suffices to observe that the number $\varepsilon>0$ is arbitrary.
\end{proof}

\subsection{Relatively compact subsets of $\Phi BV[0,1]$}
As the title of this subsection suggests, we are now going to characterize relatively compact subsets of the space $\Phi BV[0,1]$. We begin with introducing our setting.

\begin{proposition}\label{prop:semi_norm_F}
Let $(\varphi_n)_{n \in \mathbb N}$ be a fixed sequence of Young functions such that the functions $\varphi_{n+1}-\varphi_n$ are non-increasing on $[0,+\infty)$ for $n \in \mathbb N$ and $\sum_{n=1}^\infty \varphi_n(t)=+\infty$ for each $t >0$. Then, the family of semi-norms $\{\norm{\cdot}_{\mathcal J}\}_{\mathcal J \subseteq \mathcal P}$ on $(\Phi BV[0,1], \norm{\cdot}_{\Phi})$ indexed by finite subfamilies $\mathcal J$ of $\mathcal P$, where $\norm{\cdot}_{\mathcal J}$ is given by the formula~\eqref{eq:norm}, satisfies conditions~\ref{i}--\ref{iii}.      
\end{proposition}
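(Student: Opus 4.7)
The plan is to verify conditions~\ref{i}, \ref{ii}, and~\ref{iii} in turn. Conditions~\ref{i} and~\ref{ii} are fairly direct structural consequences of the definitions of $V_{\mathcal J}$ and $\abs{\cdot}_{\mathcal J}$, while~\ref{iii} is the substantive part and essentially amounts to a Helly-type selection theorem for $\Phi BV[0,1]$.

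For~\ref{i}, the inclusion $\mathcal J \subseteq \mathcal P$ immediately gives $V_{\mathcal J}(x) \leq V_{\mathcal P}(x) = \var_\Phi(x)$, and hence $\norm{x}_{\mathcal J} \leq \norm{x}_\Phi$. For the reverse inequality I would use Proposition~\ref{prop:equivalent_norm_F}: every finite non-overlapping collection $I_1, \ldots, I_k$ of closed subintervals of $[0,1]$ is itself the finite family $\mathcal J := \{I_1, \ldots, I_k\}$, which witnesses $\abs{x}_{\mathcal J} \geq \inf\{\lambda>0 : \sum_{n=1}^{k}\varphi_n(\lambda^{-1}\abs{x(I_n)}) \leq 1\}$; taking the supremum over all such collections and appealing to the representation~\eqref{eq:equivalent_norm_F_formula} yields $\abs{x}_\Phi \leq \sup_{\mathcal J \text{ finite}} \abs{x}_{\mathcal J}$, and the term $\abs{x(0)}$ is common to both sides. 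For~\ref{ii}, given finite $\mathcal J_1, \mathcal J_2 \subseteq \mathcal P$, the union $\mathcal J_3 := \mathcal J_1 \cup \mathcal J_2$ is still finite, and since enlarging the index family enlarges the class of admissible tuples, $V_{\mathcal J_i}(x) \leq V_{\mathcal J_3}(x)$ for $i=1,2$, whence $\norm{x}_{\mathcal J_i} \leq \norm{x}_{\mathcal J_3}$.

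For~\ref{iii}, the key observation is that a finite family $\mathcal J$ involves only finitely many interval endpoints, so convergence $\norm{x - x_{n_k}}_{\mathcal J} \to 0$ for every finite $\mathcal J \subseteq \mathcal P$ is equivalent to pointwise convergence $x_{n_k}(t) \to x(t)$ on all of $[0,1]$. One direction follows from the continuity of the $\varphi_n$ at the origin; the converse is already contained in the singleton case $\mathcal J = \{[0,t]\}$, where the semi-norm reduces to $\abs{y(0)} + \abs{y(t)-y(0)}/\varphi_1^{-1}(1)$. Thus~\ref{iii} reduces to the following Helly-type assertion: every bounded sequence in $\Phi BV[0,1]$ admits a pointwise convergent subsequence whose limit still lies in $\Phi BV[0,1]$.

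To prove this Helly-type result, I would first note that by Remark~\ref{rem:norm_on_PBV} the sequence is uniformly bounded in the sup-norm. Next, using the divergence assumption $\sum_n \varphi_n(t) = +\infty$, every $x \in \Phi BV[0,1]$ is regulated: if $x$ lacked a one-sided limit at some $t_0$, one could pack non-overlapping intervals $I_n$ near $t_0$ with $\abs{x(I_n)} \geq \varepsilon$, giving $\var_\Phi(\lambda x) \geq \sum_n \varphi_n(\lambda\varepsilon) = +\infty$ for every $\lambda > 0$, a contradiction. Consequently each $x_n$ has at most countably many discontinuities, and a standard diagonal procedure extracts a subsequence converging pointwise on the countable set $D$ consisting of $\mathbb{Q} \cap [0,1]$ together with all discontinuities of all $x_n$. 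The limit $x$ would then be shown to lie in $\Phi BV[0,1]$ by a Fatou-type argument applied to the finite sums $\sum_n \varphi_n(\abs{\lambda x(I_n)})$ defining $\var_\Phi$, using the continuity of the $\varphi_n$. The remaining step---upgrading to pointwise convergence on the complement $[0,1] \setminus D$---is the main obstacle, and I anticipate it requiring the strengthened hypothesis that $\varphi_{n+1} - \varphi_n$ be non-increasing (rather than merely $\varphi_{n+1} \leq \varphi_n$) in order to furnish an equi-regulatedness estimate that controls oscillations $\abs{x_n(t) - x_n(s)}$ uniformly in $n$ in terms of the $\Phi$-variation of $x_n$ on $[s,t]$.
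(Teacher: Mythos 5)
Your treatment of conditions~\ref{i} and~\ref{ii} matches the paper's proof: directedness via unions of finite families together with the monotonicity $\mathcal J_1\subseteq\mathcal J_2\Rightarrow\abs{x}_{\mathcal J_1}\leq\abs{x}_{\mathcal J_2}$, and condition~\ref{i} via Proposition~\ref{prop:equivalent_norm_F} by observing that each finite non-overlapping collection is itself one of the admissible finite index families. Your reduction of~\ref{iii} to a Helly-type statement is also essentially the paper's route: the paper extracts a pointwise convergent subsequence with limit in $\Phi BV[0,1]$ and then shows, by an explicit estimate at the finitely many endpoints of the intervals in $\mathcal J$ (choosing $k_0$ so that $\abs{x_{n_k}(t_j)-x(t_j)}\leq\varepsilon\varphi_1^{-1}(m^{-1})$ and bounding $V_{\mathcal J}\bigl(\tfrac{1}{2}\varepsilon^{-1}(x_{n_k}-x)\bigr)\leq 1$), that pointwise convergence forces $\norm{x_{n_k}-x}_{\mathcal J}\to 0$ for every finite $\mathcal J$. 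Your observation that the converse direction is captured by singletons $\mathcal J=\{[0,t]\}$ is correct, though not needed for~\ref{iii}.

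The genuine gap is in the Helly-type selection step itself. The paper does not prove this from scratch: it invokes the known Helly selection theorem for the Schramm variation (\cite{ABM}*{Theorem~2.49}, \cite{schramm}*{Theorem~2.8}), and the strengthened hypothesis that $\varphi_{n+1}-\varphi_n$ be non-increasing is imposed precisely so that the published proof of that theorem is valid (see Remark~\ref{rem:explanation_why_add_assumption}, which traces the issue to a rearrangement inequality needed in \cite{schramm}*{Lemma~2.5}). You instead sketch a direct proof and explicitly leave its decisive step unresolved: after the diagonal extraction on a countable set $D$, you do not show how to upgrade to pointwise convergence on $[0,1]\setminus D$. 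This is exactly where the classical BV argument uses the monotone decomposition (unavailable here) and where Schramm's argument uses delicate estimates on the variation function; merely knowing that each $x_n$ is regulated and continuous off $D$ does not yield convergence there, since the moduli of one-sided regularity need not be uniform in $n$ without a quantitative equi-regulatedness bound that you correctly anticipate but never establish. As written, condition~\ref{iii} is therefore not proved; the fix is either to carry out that uniform oscillation estimate in detail or, as the paper does, to cite the existing selection theorem.
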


\begin{proof}
The proof falls naturally into three steps. Note that if $\mathcal J_1 \subseteq \mathcal J_2 \subseteq \mathcal P$, then $\abs{x}_{\mathcal J_1}\leq \abs{x}_{\mathcal J_2}$ for every $x \in \Phi BV[0,1]$. This implies that the family $\{\norm{\cdot}_{\mathcal J}\}_{\mathcal J \subseteq \mathcal P}$ of semi-norms indexed by finite subfamilies $\mathcal J$ of $\mathcal P$ satisfies condition~\ref{ii}, because for two given indices (finite families) $\mathcal J_1,\mathcal J_2 \subseteq  \mathcal P$ we can take $\mathcal J_1 \cup \mathcal J_2$ as the succeeding index (finite family).

To show that~\ref{i} is also satisfied fix $x \in \Phi BV[0,1]$. Furthermore, set
\[
 \scal{x}_{\mathcal L}:=\sup\biggl(\inf\dset[\bigg]{\lambda>0}{ \sum_{n=1}^{k}\varphi_n\big(\abs{\lambda^{-1}x(I_n)}\big)\leq 1}\biggr),
\]
where the supremum is taken over all finite collections $I_1,\ldots,I_k$ of non-overlapping intervals such that $I_n \in \mathcal L$ for $n=1,\ldots,k$. In particular, if $I_1,\ldots,I_k$ are non-overlapping and closed subintervals of $[0,1]$, then
\[
 \inf\dset[\bigg]{\lambda>0}{ \sum_{n=1}^{k}\varphi_n\big(\abs{\lambda^{-1}x(I_n)}\big)\leq 1} \leq \scal{x}_{\mathcal J_\ast},
\]
where $\mathcal J_\ast:=\{I_1,\ldots,I_k\}$. And so $\scal{x}_{\mathcal P}\leq \sup_{\mathcal J \subseteq \mathcal P}\scal{x}_{\mathcal J}$, where the supremum is taken over all finite subfamilies $\mathcal J$ of $\mathcal P$. Then, by Proposition~\ref{prop:equivalent_norm_F} we have $\abs{x}_{\Phi}=\abs{x}_{\mathcal P}=\scal{x}_{\mathcal P} \leq \sup_{\mathcal J \subseteq \mathcal P} \scal{x}_{\mathcal J} = \sup_{\mathcal J \subseteq \mathcal P} \abs{x}_{\mathcal J} \leq \abs{x}_{\mathcal P}=\abs{x}_{\Phi}$; here, as before, the suprema are taken over all finite subfamilies $\mathcal J$ of $\mathcal P$. From the above equality it now follows easily that condition~\ref{i} is satisfied.

It remains now to prove~\ref{iii}. Let $(x_n)_{n \in \mathbb N}$ be an arbitrary bounded sequence in $\Phi BV[0,1]$. Then, there exist two constants $c>0$ and $M>0$ such that $\sup_{n \in \mathbb N}\norm{cx_n}_{\infty} \leq M$ and $\sup_{n \in \mathbb N}\var_\Phi(cx_n)\leq M$ (cf.~Remark~\ref{rem:norm_on_PBV}). Applying  Helly's selection theorem for the Schramm variation (see~\cite{ABM}*{Theorem~2.49} or~\cite{schramm}*{Theorem~2.8}), we deduce that there is a subsequence $(x_{n_k})_{k \in \mathbb N}$ of $(x_n)_{n \in \mathbb N}$ which is pointwise convergent to some $x \in \Phi BV[0,1]$ on $[0,1]$.   

Fix a family $\mathcal J:=\{I_1,\ldots,I_m\} \subseteq \mathcal P$ with each interval $I_j$ of the form $[a_j,b_j]$ and an arbitrary positive number $\varepsilon$.  In view of the pointwise convergence of the subsequence $(x_{n_k})_{k \in \mathbb N}$ we can find an index $k_0 \in \mathbb N$ such that $\abs{x_{n_k}(0)-x(0)}\leq \varepsilon$ and $\abs{x_{n_k}(t_j)-x(t_j)}\leq \varepsilon\varphi_1^{-1}(m^{-1})$ for all $k \geq k_0$ and $j=1,\ldots,m$; here $t_j \in \{a_j,b_j\}$. If $J_1,\ldots,J_l$ is a finite collection of non-overlapping subintervals of $[0,1]$ such that $J_j:=[c_j,d_j] \in \mathcal J$, then for $k \geq k_0$ we have
\begin{align*}
&\sum_{j=1}^l \varphi_j\bigl(\tfrac{1}{2}\varepsilon^{-1} \abs{(x_{n_k}-x)(J_j)}\bigr)\\
&\quad=\sum_{j=1}^l \varphi_j\bigl(\tfrac{1}{2}\varepsilon^{-1}\abs{(x_{n_k}-x)(d_j)-(x_{n_k}-x)(c_j)}\bigr)\\
&\quad \leq \frac{1}{2}\sum_{j=1}^l \varphi_j\bigl(\varepsilon^{-1}\abs{x_{n_k}(d_j)-x(d_j)}\bigr) +  \frac{1}{2}\sum_{j=1}^l \varphi_j\bigl(\varepsilon^{-1}\abs{x_{n_k}(c_j)-x(c_j)}\bigr)\\
&\quad \leq \sum_{j=1}^l \varphi_j(\varphi^{-1}_1(m^{-1})) \leq l \varphi_1(\varphi^{-1}_1(m^{-1})) = lm^{-1}\leq 1;
\end{align*}
the first inequality in the last line holds because $\dset{c_j,d_j}{j=1,\ldots,l} \subseteq \dset{a_j,b_j}{j=1,\ldots,m}$ and the last one follows from the fact that the number $l$ cannot exceed the cardinality of $\mathcal J$. Thus, $V_{\mathcal J}\bigl(\tfrac{1}{2}\varepsilon^{-1}(x_{n_k}-x)\bigr)\leq 1$ for $k \geq k_0$. Consequently, for $k \geq k_0$ we have
\[
 \norm{x_{n_k}-x}_{\mathcal J} = \abs{x_{n_k}(0)-x(0)}+\abs{x_{n_k}-x}_{\mathcal J}\leq 3\varepsilon.
\]
This shows that condition~\ref{iii} is satisfied.  
\end{proof}

\begin{remark}\label{rem:explanation_why_add_assumption}
We feel that we need to explain why (in contrast to earlier results) in Proposition~\ref{prop:semi_norm_F} we strengthened the assumption that $\varphi_{n+1}(t)\leq \varphi_n(t)$ for $n \in \mathbb N$ and $t>0$, and replaced it with the requirement that the functions $\varphi_{n+1}-\varphi_n$ are non-increasing on $[0,+\infty)$ for $n \in \mathbb N$. The sole reason behind our decision is that without this assumption the proof of Helly's selection theorem for the Schramm variation (presented in~\cites{ABM,schramm}) may be not correct. The proof is based on a very technical result concerning some pointwise estimates of the so-called variation function of a given function $x$ such that $\var_{\Phi} x<+\infty$ (see~\cite{schramm}*{Lemma~2.5}). A starting point to obtain those estimates (technical issues aside) is to choose, for a given $\varepsilon>0$, a finite collection $I_1,\ldots,I_k$ of closed and non-overlapping subintervals of $[0,1]$ such that $\abs{x(I_1)}\geq \abs{x(I_2)}\geq \ldots \geq \abs{x(I_n)}$ and
\begin{equation}\label{eq:estimate_var}
 \var_{\Phi} x \leq \varepsilon + \sum_{n=1}^k \varphi_n\bigl(\abs{x(I_n)}\bigr).
\end{equation}
However, in general, it is not possible to control the monotonicity of the sequence $\bigl(\abs{x(I_n)}\bigr)_{n=1}^k$, guaranteeing at the same time that the estimate~\eqref{eq:estimate_var} holds, unless we know that the differences $\varphi_{n+1}-\varphi_n$ are non-increasing. To see this, let us consider the sequence $(\varphi_n)_{n \in \mathbb N}$ of Young functions given by  
\[
 \varphi_1(t)=\begin{cases}
               t, & \text{if $t \in [0,1]$,}\\
							2t-1, & \text{if $t \in (1,+\infty)$,}
							 \end{cases} \quad \text{and} \quad  \varphi_n(t)=\begin{cases}
               t^2, & \text{if $t \in [0,1]$,}\\
							2t-1, & \text{if $t \in (1,+\infty)$,}
							 \end{cases}
\]
for $n\geq 2$. Moreover, let $x \colon [0,1] \to \mathbb R$ be given by 
\[
 x(t)=\begin{cases}
        \frac{3}{4}, & \text{if $t=0$,}\\
				0, & \text{if $t \in (0,1)$,}\\
				\frac{1}{2}, & \text{if $t=1$}.
				\end{cases}
\]
If $I_1,\ldots,I_k$ is a finite collection of closed, non-degenerate and non-overlapping subintervals of $[0,1]$, then in the sum $\sum_{n=1}^k \varphi_n\bigl(\abs{x(I_n)}\bigr)$ at most two summands are non-zero, and those summands correspond to those intervals which contain the points $t=0$ and $t=1$. In other words, in order to maximize $\sum_{n=1}^k \varphi_n\bigl(\abs{x(I_n)}\bigr)$ we may assume that $0,1 \in I_1\cup I_2$. If $0,1$ belong to the same interval, then the collection $I_1,\ldots,I_k$ reduces to the interval $I_1:=[0,1]$ and
\[
 \sum_{n=1}^k \varphi_n\bigl(\abs{x(I_n)}\bigr)=\abs{x(I_1)}=\frac{1}{4}.
\]
On the other hand, if $0\in I_1$ and $1 \in I_2$, then 
\[
\sum_{n=1}^k \varphi_n\bigl(\abs{x(I_n)}\bigr) = \abs{x(I_1)} + \abs{x(I_2)}^2 = \frac{3}{4}+\frac{1}{4}=1. 
\]
Finally, if $1\in I_1$ and $0 \in I_2$, then 
\[
\sum_{n=1}^k \varphi_n\bigl(\abs{x(I_n)}\bigr) = \abs{x(I_1)} + \abs{x(I_2)}^2 = \frac{1}{2}+\frac{9}{16}=\frac{17}{16}. 
\]
Thus, $\var_\Phi x = \frac{17}{16}$ (cf. Proposition~\ref{prop:variation_various_definitions}). This also shows that for any $\varepsilon \in (0,\frac{1}{16})$ the only viable collection of closed and non-overlapping subintervals of $[0,1]$ so that~\eqref{eq:estimate_var} holds is: $J_1:=[a,1]$ and $J_2:=[0,b]$ for any $0<b\leq a <1$. But then $\abs{x(J_1)}=\frac{1}{2}<\frac{3}{4}=\abs{x(J_2)}$.

The above example is a manifestation of a general rule concerning rearrangement inequalities for finite sequences. In~\cite{vince} Vince proved the following result. \emph{Let $g_1,\ldots,g_k$ be real-valued functions defined on an interval $J$. Then,
\[
 \sum_{n=1}^k g_n(b_{k-n+1})\leq \sum_{n=1}^k g_n(b_{\pi(n)}) \leq \sum_{n=1}^k g_n(b_n)
\]
for any sequence $b_1\leq b_2\leq \ldots\leq b_k$ in $J$ and any permutation $\pi$ of the set $\{1,\ldots,k\}$ if and only if the functions $g_{n+1}-g_n$ are non-decreasing on $J$ for $n=1,\ldots,k-1$.}

Fortunately, the assumption that for a given sequence of Young functions $(\varphi_n)_{n \in \mathbb N}$ the increments $\varphi_{n+1}-\varphi_n$ are non-increasing on $[0,+\infty)$ is quite natural, as it is satisfied not only in all the classical cases described in Remark~\ref{rem:variations}, but also in the case of the so-called $\phi\Lambda$-variation and $\Lambda$-variation of order $p$ studied in~\cite{SW} and~\cite{shiba}, respectively. 
\end{remark}

We end this subsection with a compactness criterion in $\Phi BV[0,1]$, which is a consequence of the previous results, the abstract results proved in Section~\ref{sec:abstract_compactness_criterion} and an easy fact that $\Phi BV[0,1]$ is a Banach space in the norm $\norm{\cdot}_{\Phi}$ (cf.~\cite{schramm}*{Theorem~2.3} and see~\cite{ABM}*{Proposition~2.44 (e)}). (Observe also that if a family of semi-norms satisfies condition~\ref{iii}, then it also satisfies condition~\ref{iv}.)

\begin{theorem}\label{thm:compactnes_PBV}
Let $(\varphi_n)_{n \in \mathbb N}$ be a fixed sequence of Young functions such that the functions $\varphi_{n+1}-\varphi_n$ are non-increasing on $[0,+\infty)$ for $n \in \mathbb N$ and $\sum_{n=1}^\infty \varphi_n(t)=+\infty$ for each $t >0$. For any bounded and non-empty subset $A$ of $(\Phi BV[0,1], \norm{\cdot}_\Phi)$ the following conditions are equivalent\textup:
\begin{enumerate}[label=\textup{(\roman*)}]
 \item $A$ is relatively compact in $(\Phi BV[0,1], \norm{\cdot}_\Phi)$,
 \item for every $\varepsilon>0$ and every $y \in \Phi BV[0,1]$ there exists a finite family $\mathcal J$ of closed subintervals of $[0,1]$ such that $\abs{x-y}_\Phi \leq \varepsilon + \abs{x-y}_\mathcal J$ for every $x \in A$,
 \item for every $\varepsilon>0$ there exists a finite family $\mathcal J$ of closed subintervals of $[0,1]$ such that $\abs{x-y}_\Phi \leq \varepsilon + \abs{x-y}_\mathcal J$ for all $x,y \in A$\textup;
\end{enumerate}
here the semi-norms $\abs{\cdot}_\Phi$ and $\abs{\cdot}_{\mathcal J}$ are defined by the formula~\eqref{eq:semi-norm}.
\end{theorem}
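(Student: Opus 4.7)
The plan is to reduce Theorem~\ref{thm:compactnes_PBV} to the two abstract criteria of Section~\ref{sec:abstract_compactness_criterion} applied to the family of semi-norms $\{\norm{\cdot}_{\mathcal J}\}_{\mathcal J \subseteq \mathcal P}$ indexed by finite subfamilies $\mathcal J \subseteq \mathcal P$. By Proposition~\ref{prop:semi_norm_F}, this family satisfies~\ref{i},~\ref{ii} and~\ref{iii}; since~\ref{iii} trivially implies~\ref{iv} and $(\Phi BV[0,1],\norm{\cdot}_\Phi)$ is a Banach space, both Theorem~\ref{thm:compactness_ver1} and Theorem~\ref{thm:compactness_ver2} are available, and in this complete setting precompactness and relative compactness coincide.

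The first step is a simple bookkeeping observation that translates the inequality appearing in conditions~(ii) and~(iii) into the language of equinormed sets. Namely, from~\eqref{eq:norm} we have $\norm{x-y}_{\mathcal J}=\abs{x(0)-y(0)}+\abs{x-y}_{\mathcal J}$ for every finite $\mathcal J\subseteq \mathcal P$ (including $\mathcal J=\mathcal P$, in which case $\abs{\cdot}_{\mathcal J}=\abs{\cdot}_\Phi$). Subtracting $\abs{x(0)-y(0)}$ from both sides of the inequality in~(ii) and~(iii) shows that those conditions are equivalent to
\[
 \norm{x-y}_\Phi \leq \varepsilon + \norm{x-y}_{\mathcal J},
\]
i.e.\ to the statement that the set $A-y$ (respectively $A-A$) is equinormed with respect to $\{\norm{\cdot}_{\mathcal J}\}_{\mathcal J}$ in the sense of Definition~\ref{def:equinormed}.

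With this translation in hand, the equivalence of~(i) and~(ii) is a direct application of Theorem~\ref{thm:compactness_ver1} together with Remark~\ref{rem:comacptness_ver1_every_x}: a bounded subset $A$ of $\Phi BV[0,1]$ is relatively compact if and only if $A-y$ is equinormed for every $y\in \Phi BV[0,1]$. The equivalence of~(i) and~(iii) is a direct application of Theorem~\ref{thm:compactness_ver2} (using completeness to identify precompactness with relative compactness): $A$ is relatively compact if and only if $A-A$ is equinormed.

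There is essentially no serious obstacle here: all of the heavy lifting has already been carried out, the analytic content being concentrated in Proposition~\ref{prop:semi_norm_F} (which, in particular, invokes the Helly selection theorem for the Schramm variation to verify~\ref{iii}). The only point requiring a small amount of care is the cancellation of the pointwise evaluation $\abs{x(0)-y(0)}$ described in the second paragraph, which is what allows one to state~(ii) and~(iii) using the semi-norms $\abs{\cdot}_\Phi$ and $\abs{\cdot}_{\mathcal J}$ rather than the full norms $\norm{\cdot}_\Phi$ and $\norm{\cdot}_{\mathcal J}$.
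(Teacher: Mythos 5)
Your proposal is correct and follows essentially the same route as the paper, which itself only sketches the argument: apply Theorems~\ref{thm:compactness_ver1} and~\ref{thm:compactness_ver2} to the family $\{\norm{\cdot}_{\mathcal J}\}$ of Proposition~\ref{prop:semi_norm_F}, use completeness of $\Phi BV[0,1]$ and the implication \ref{iii}$\Rightarrow$\ref{iv}, and cancel the term $\abs{x(0)-y(0)}$ to pass between $\norm{\cdot}_{\mathcal J}$ and $\abs{\cdot}_{\mathcal J}$. Your explicit treatment of that cancellation is a worthwhile detail the paper leaves implicit.
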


\begin{remark}
Note that the results of this subsection extend the results obtained by Bugajewski and Gulgowski in~\cites{BG20, G21} concerning (relative) compactness in the spaces of functions of bounded Jordan, Young and Waterman variations (cf. Remark~\ref{rem:variations}).
\end{remark}

We end this paper with yet another open problem.

\begin{question}
The reason behind strengthening the monotonicity assumption on the sequence $(\varphi_n)_{n \in \mathbb N}$ of Young functions in all the compactness results in the space $\Phi BV[0,1]$ was we explained in detail in Remark~\ref{rem:explanation_why_add_assumption}. Long story short, without this stronger assumption it is not clear whether Helly's selection theorem for functions of bounded Schramm variation (that is, Theorem~2.8 in~\cite{schramm}) holds. It is, therefore, interesting whether it is possible to prove Helly's selection theorem (and consequently all the compactness results established above) under the ``original'' assumption that the sequence $(\varphi_n)_{n \in \mathbb N}$ is such that $\varphi_{k+1}(t)\leq \varphi_k(t)$ and $\sum_{n=1}^\infty \varphi_n(t)=+\infty$ for each $t >0$ and $k \in \mathbb N$.  
\end{question}

\begin{ackn}
We would like to thank Professor Dariusz Bugajewski for his questions and comments concerning the properties of equi-normed sets and for the opportunity to present our research at the Nonlinear Analysis Seminar held at the Faculty of Mathematics and Computer Science of Adam Mickiewicz University in Pozna\'n.

We are also very grateful to the anonymous Referee for his/her valuable comments that helped to improve the paper.
\end{ackn}

\begin{bibdiv}
\begin{biblist}

\bib{AKPRS}{book}{
  author={Akhmerov, R. R.},
  author={Kamenskii , M. I.},
  author={Potapov, A. S.},
  author={Rodkina, A. E.},
  author={Sadovskii , B. N.},
  title={Measures of noncompactness and condensing operators},
  series={Operator Theory: Advances and Applications},
  volume={55},
  publisher={Birkh\"auser Verlag, Basel},
  date={1992},
}

   \bib{Ambrosetti}{article}{
     author={Ambrosetti, A.},
     title={Un teorema di esistenza per le equazioni differenziali negli spazi di Banach},
     year={1967},
     journal={Rend. Sem. Mat. Univ. Padova},
     volume={39},
     pages={349--360},
   }

\bib{ABM}{book}{
   author={Appell, J.},
   author={Bana\'s, J.},
   author={Merentes, N.},
   title={Bounded variation and around},
   series={De Gruyter Series in Nonlinear Analysis and Applications},
   volume={17},
   publisher={De Gruyter, Berlin},
   date={2014},
}

\bib{BBK}{book}{
  author={Borkowski, M.},
  author={Bugajewska, D.},
  author={Kasprzak, P.},
  title={Selected problems in nonlinear analysis},
	publisher={The Nicolaus Copernicus University Press},
	address={Toru\'n},
	date={2021},
}

\bib{BG20}{article}{
  author={Bugajewski, D.},
  author={Gulgowski, J.},
  title={On the characterization of compactness in the space of functions of bounded variation in the sense of Jordan},
  journal={J. Math. Anal. Appl.},
  volume={484},
  date={2020},
  number={2},
  pages={123752, 17 pages},
}

\bib{G21}{article}{
   author={Gulgowski, J.},
   title={Compactness in the spaces of functions of bounded
variation},
   pages={submitted},
}

\bib{MV}{book}{
  author={Meise, R.},
  author={Vogt, D.},
  title={Introduction to functional analysis},
  series={Oxford Graduate Texts in Mathematics},
  volume={2},
  publisher={The Clarendon Press, Oxford University Press, New York},
  date={1997},
}

\bib{precup}{book}{
 author={Precup, R.},
 title={Methods in nonlinear integral equations},
 publisher={Springer-Science+Business Media, B.V.},
 date={2002},
 address={Dordrecht},
}

\bib{Rud}{book}{
   author={Rudin, W.},
   title={Principles of Mathematical Analysis},
   series={International Series in Pure and Applied Mathematics},
   publisher={McGraw--Hill Inc.},
   date={1976},
}

\bib{schmidt}{article}{
   author={Schmidt, S.},
   title={Representation of the Hausdorff measure of noncompacntess in special Banach spaces},
   journal={Comment. Math. Univ. Carolinae},
   volume={30},
   date={1989},
   number={4},
   pages={733--735},
}

\bib{schramm}{article}{
   author={Schramm, M.},
   title={Functions of $\Phi$-bounded variation and Riemann--Stieltjes
   integration},
   journal={Trans. Amer. Math. Soc.},
   volume={287},
   date={1985},
   number={1},
   pages={49--63},
}

\bib{SW}{article}{
   author={Schramm, M.},
   author={Waterman, D.},
   title={On the magnitude of Fourier coefficients},
   journal={Proc. Amer. Math. Soc.},
   volume={85},
   date={1982},
   number={3},
   pages={407--410},
}

\bib{shiba}{article}{
   author={Shiba, M.},
   title={On absolute convergence of Fourier series of function of class
   $\Lambda -{\rm BV}^{(p)}$},
   journal={Sci. Rep. Fac. Ed. Fukushima Univ.},
   number={30},
   date={1980},
   pages={7--10},
}

\bib{vince}{article}{
   author={Vince, A.},
   title={A rearrangement inequality and the permutahedron},
   journal={Amer. Math. Monthly},
   volume={97},
   date={1990},
   number={4},
   pages={319--323},
}

\end{biblist}
\end{bibdiv}

\end{document}